\newcommand{\be}{\begin{eqnarray}}
	\newcommand{\ee}{\end{eqnarray}}
\newcommand{\bestar}{\begin{eqnarray*}}
	\newcommand{\eestar}{\end{eqnarray*}}
\newtheorem{thm}{\textbf{Theorem}}
\newtheorem{lemma}{\textbf{Lemma}}
\newtheorem{corollary}{\textbf{Corollary}}
\titleformat{\section}{\centering\large\bfseries}{\S\arabic{section}}{1em}{}
\begin{document}

\setlength\abovedisplayskip{2pt}
\setlength\abovedisplayshortskip{0pt}
\setlength\belowdisplayskip{2pt}
\setlength\belowdisplayshortskip{0pt}

\title{\bf \Large Open and increasing paths on $N$-ary trees with different fitness values\author{REN Tian-xiang~~~~~~WU Jin-wen}\date{}} \maketitle
 \footnote{Received: 2022-09-19.}
 \footnote{AMS 2020 Subject Classification: 05C05,    
		60F05.}
 \footnote{Keywords: Open path;  increasing path;  $N$-ary tree.}
 \footnote{Digital Object Identifier(DOI): 10.1007/s11766-016.}
\begin{center}
\begin{minipage}{135mm}

{\bf \small Abstract}.\hskip 2mm {\small
Consider a rooted $N$-ary tree. For every vertex of this tree, we atttach an i.i.d. Bernoulli random variable. A path is called open if all the random variables that are assigned on the path are 1. We consider limiting  behaviors for the number of open paths from the root to leaves and the longest open path. In addition, when all fitness values are i.i.d. continuous random variables, some asymptotic properties of the longest increasing path are proved.}
\end{minipage}
\end{center}

\thispagestyle{fancyplain} \fancyhead{}
\fancyhead[L]{\textit{Appl. Math. J. Chinese Univ.}\\
2022, **} \fancyfoot{} \vskip 10mm

	\section{Introduction}
	
	Since Broadbent and Hammersley introduced the percolation models in 1957, it has become a cornerstone of probability theory and statistical physics, with applications ranging from molecular dynamics to star formation. 
	Standard percolation theory is concerned with the loss of global connectivity in a graph when vertices or bonds are randomly removed, as quantified by the probability for the existence of an infinite cluster of contiguous vertices. Here we consider a kind of percolation problem.

	Let $\mathbb{T}^{(N)}$ be a rooted $N$-ary tree in which each vertex has exactly $N$ children, of which the root we denote by $o$. Each vertex $\sigma \in \mathbb{T}^{(N)}$ is assigned a random variable  $X_{\sigma}$, called its fitness. The fitness values are independent and identically distributed (i.i.d.) random variables. We consider the Bernoulli random variable $X_{\sigma}$ with $\mathbb{P}(X_{\sigma}=0)=1-\mathbb{P}(X_{\sigma}=1)=1-p$. 
	We say that the path $P=\sigma_0\sigma_1\sigma_2\cdots \sigma_k$ with length $l(P)=k$ is  a path down the tree if $P$ starts at any vertex and descends into children until it stops at some node, i.e., $|\sigma_0|=|\sigma_1|-1=|\sigma_2|-2=\cdots=|\sigma_k|-k$, and $P$ is an open path if $X_{\sigma_0}=X_{\sigma_1}=X_{\sigma_2}=\cdots=X_{\sigma_k}=1$, where $|\sigma|$ is defined as the distance from $o$ to $\sigma$.

	Firstly, we care about the statistics of such open paths from the root to leaves, specifically the probability for the existence of open paths that span the entire tree. Let $\Theta_n$ be the number of open paths from the root to leaves. Since the probability that a given path from the root to the leaf is open is $p^{n+1}$,  by linearity the first moment of $\Theta_n$ follows immediately,
	\bestar
	\mathbb{E}(\Theta_n)=N^np^{n+1}.
	\eestar
	\begin{thm}\label{th1}
 For any $N\geq 2$, we have that
  \bestar
 \lim_{n\to\infty}\mathbb{P}(\Theta_n\ge 1)\left\{
 \begin{array}{ll}
   \ge(Np-1)/(N-1), & \mbox{if}~ p>1/N;\\
  =0, & \mbox{if}~ p\leq 1/N.
 \end{array}
 \right.
 \eestar
\end{thm}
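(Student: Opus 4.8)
The plan is to read $\Theta_n$ as the generation sizes of a Galton--Watson-type process: the set of vertices reachable from $o$ by an open path is an open subtree whose branching, conditioned on a vertex being open, is $\mathrm{Bin}(N,p)$ with mean $m=Np$. The threshold $p=1/N$ is exactly criticality $m=1$, so I would split the argument according to $m\le 1$ versus $m>1$. Before anything else I would record the monotonicity that makes the limit meaningful: an open path from $o$ to level $n+1$ restricts to an open path to level $n$, so $\{\Theta_n\ge 1\}$ is a decreasing sequence of events and $\lim_{n\to\infty}\mathbb{P}(\Theta_n\ge 1)=\inf_n\mathbb{P}(\Theta_n\ge 1)$ exists.

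For $p\le 1/N$ I would set $b_n$ to be the probability that, starting from a vertex already known to be open, some open path descends exactly $n$ further levels. Conditioning on the $N$ children gives $b_0=1$ and the recursion $b_n=1-(1-p\,b_{n-1})^N$, together with $\mathbb{P}(\Theta_n\ge 1)=p\,b_n$. Writing $\phi(x)=1-(1-px)^N$, Bernoulli's inequality gives $(1-px)^N\ge 1-Npx\ge 1-x$ for $x\in(0,1]$ whenever $Np\le 1$, with strict inequality for $N\ge 2$ and $x>0$; hence $\phi(x)<x$ on $(0,1]$, so $b_n$ decreases strictly to the unique fixed point $0$ and $\mathbb{P}(\Theta_n\ge 1)=p\,b_n\to 0$. (For the strict subcritical range $p<1/N$ one could instead quote Markov's inequality, $\mathbb{P}(\Theta_n\ge 1)\le\mathbb{E}(\Theta_n)=p(Np)^n\to 0$, but the monotone fixed-point analysis is what covers the delicate critical boundary $p=1/N$, where $\mathbb{E}(\Theta_n)=1/N$ stays bounded away from $0$.)

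For $p>1/N$ the mean $\mathbb{E}(\Theta_n)=N^np^{n+1}\to\infty$ is uninformative for a lower bound, so I would apply the second-moment (Paley--Zygmund) inequality $\mathbb{P}(\Theta_n\ge 1)\ge(\mathbb{E}\Theta_n)^2/\mathbb{E}(\Theta_n^2)$. The central computation is $\mathbb{E}(\Theta_n^2)=\sum_{P,P'}\mathbb{P}(P,P'\text{ open})$, which I would organize by the level $k$ at which the two root-to-leaf paths last coincide. A pair splitting at level $k<n$ covers $2n+1-k$ distinct vertices and can be chosen in $(N-1)N^{2n-k-1}$ ways, while the $N^n$ diagonal pairs $P=P'$ cover $n+1$ vertices; this yields
\[
\mathbb{E}(\Theta_n^2)=\sum_{k=0}^{n-1}(N-1)N^{2n-k-1}p^{2n+1-k}+N^np^{n+1}=\frac{N-1}{Np}(\mathbb{E}\Theta_n)^2\sum_{k=0}^{n-1}(Np)^{-k}+\mathbb{E}\Theta_n.
\]

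Dividing by $(\mathbb{E}\Theta_n)^2$ gives $\mathbb{E}(\Theta_n^2)/(\mathbb{E}\Theta_n)^2=\frac{N-1}{Np}\sum_{k=0}^{n-1}(Np)^{-k}+1/\mathbb{E}\Theta_n$; since $Np>1$ the geometric sum converges to $Np/(Np-1)$ and $\mathbb{E}\Theta_n\to\infty$, so the ratio tends to $(N-1)/(Np-1)$. Therefore $\mathbb{P}(\Theta_n\ge 1)\ge(\mathbb{E}\Theta_n)^2/\mathbb{E}(\Theta_n^2)\to(Np-1)/(N-1)$, and passing to the limit (which exists by the monotonicity noted at the outset) gives $\lim_{n\to\infty}\mathbb{P}(\Theta_n\ge 1)\ge(Np-1)/(N-1)$. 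I expect the main obstacle to be the second-moment step: correctly enumerating pairs of paths by their branch point and evaluating the resulting geometric sum is the real bookkeeping, and the critical case $p=1/N$ is where one genuinely cannot rely on the first moment and must use the contraction property $\phi(x)<x$.
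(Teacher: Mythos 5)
Your proof is correct, and its core is the same as the paper's. For $p>1/N$ your argument is the paper's almost verbatim: the identical second-moment computation, enumerating ordered pairs of root-to-leaf paths by the level $k$ of their branch point ($(N-1)N^{2n-k-1}$ pairs covering $2n+1-k$ vertices), followed by the Paley--Zygmund inequality; your closed form for $\mathbb{E}(\Theta_n^2)$ agrees exactly with the paper's formula and both yield the limit $(Np-1)/(N-1)$. The only genuine difference is in the range $p\le 1/N$. The paper splits it: for $p<1/N$ it uses Markov's inequality $\mathbb{P}(\Theta_n\ge1)\le\mathbb{E}(\Theta_n)=p(Np)^n\to 0$, and only at the critical point $p=1/N$ does it invoke the recursion $Q_{n+1}=pQ_n^N+1-p$ for $Q_n=1-\mathbb{P}(\Theta_n\ge1)$, analyzing $f(x)=px^N-x+1-p$ to conclude $Q=1$. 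Your recursion $b_n=1-(1-pb_{n-1})^N$ is the same recursion in disguise (substitute $Q_n=1-pb_n$), but your use of strict Bernoulli, $(1-px)^N>1-Npx\ge 1-x$, gives $\phi(x)<x$ on $(0,1]$ simultaneously for all $Np\le 1$, so one fixed-point argument disposes of the subcritical and critical cases together --- a modest but real streamlining. Likewise, your observation that the events $\{\Theta_n\ge 1\}$ are decreasing gives existence of the limit more directly than the paper's appeal to monotonicity of $Q_n$ through the recursion. Both routes buy the same theorem; yours is slightly more unified, the paper's makes the cheap Markov bound explicit where it suffices.
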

This implies that there is a phase transition at $p=1/N$.

	Since there may not be an open path from the root to the leaves on $N$-ary tree, a natural idea is to study the longest length of open paths from any vertex. Let $\mathbb{T}_{n}^{(N)}$ be the subgraph of $\mathbb{T}^{(N)}$ induced by the set of nodes with levels not exceeding $n$, and $\mathcal{P}_{n}$ be the set of paths down the $\mathbb{T}_{n}^{(N)}$, then the length of the longest open path from any vertex can be defined as
	\bestar
	L_{N,n}:=\max_{P\in\mathcal{P}_{n}}\lbrace l(P):\; P ~is~an~open~path~down~the~tree\rbrace.
	\notag
	\eestar
	
	For $N=1$, $L_{1,n}$ is called the longest head run, which has important applications in biology, reliability theory, finance, and nonparametric statistics (see \cite{ref2, ref3,ref4,ref16}). One of the most important results on asymptotic behaviors of  $L_{1,n}$ is the following Erd\H{o}s-R\'enyi Law \cite{ref6, ref7}:
	\bestar
	\frac{L_{1,n}}{\ln{n}}\stackrel{a.s.~}{\longrightarrow}-\frac{1}{\ln{p}}.\label{1}
	\eestar
	Besides the law of large numbers, the possible asymptotic distribution of $L_{1,n}$ is also discussed (see \cite{ref1, ref8, ref9, ref10, ref11,ref12, ref14, ref15}). It is proved in \cite{ref10} that $L_{1,n}+\ln{n}/\ln{p}$ possesses no limit distribution. F\"oldes proved the distribution of the longest head run in the first $n$ trials of a coin tossing sequence which contains at most $T$ tails in \cite{ref8}, then extended it in \cite{ref9} and obtained that for $k\in\mathbb{Z}$,
	\bestar
	\mathbb{P}(L_{1,n}-[\ln{n}]<k)=\exp\lbrace{-2^{-(k+1-\lbrace{\ln{n}\rbrace})}\rbrace}+o(1),\label{2}
	\eestar
	where $\lbrace{\ln{n}\rbrace}=\ln{n}-[\ln{n}]$, $[\ln{n}]$ is the integer part of $\ln{n}$. A more accurate result of the limit distribution of $L_{1,n}$ is obtained in \cite{ref15}. Then the large deviation theorem for $L_{1,n}$ is shown in \cite{ref12}, and an estimation of the accuracy of approximation in terms of the total variation distance is established in \cite{ref14} for the first time.
	
	For any $N\geq 2$, we obtain the law of large numbers for $L_{N,n}$.
	\begin{thm}\label{th2}
		For any $N\geq 2$, $0<p<1$, as $n\to\infty$,
		\be
		\frac{L_{N,n}}{n}\overset{p}{\longrightarrow} 
		\begin{cases}
			-\log_p{N},&\text{if $0<p<1/N$};\\
			1,&\text{if $1/N\leq p<1$}.\\
		\end{cases}
		\notag
		\ee
	\end{thm}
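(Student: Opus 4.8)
The plan is to prove the two-sided bound $L_{N,n}/n \overset{p}{\to} c$ with $c=\min\{-\log_p N,\,1\}$, which is exactly the claimed piecewise limit since $-\log_p N=\ln N/\ln(1/p)\le 1$ precisely when $p\le 1/N$. Write $Z_k$ for the number of open paths of length $k$ down the tree $\mathbb{T}_n^{(N)}$, so that $L_{N,n}=\max\{k:Z_k\ge 1\}$, and reduce convergence in probability to showing, for every small $\epsilon>0$, that $\mathbb{P}(L_{N,n}>(c+\epsilon)n)\to 0$ and $\mathbb{P}(L_{N,n}<(c-\epsilon)n)\to 0$.

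For the upper bound I would use a first-moment (union) bound. Counting downward paths of length $k$ by their starting level $j\in\{0,\dots,n-k\}$ gives $\mathbb{E}(Z_k)=\frac{N^{n+1}-N^k}{N-1}\,p^{k+1}\le \frac{N^{n+1}}{N-1}p^{k+1}$, whence $\mathbb{P}(L_{N,n}\ge k)=\mathbb{P}(Z_k\ge 1)\le \mathbb{E}(Z_k)$ by Markov. Taking $k=\lceil(c+\epsilon)n\rceil$ and using $p^{c}=1/N$ when $p<1/N$, the bound becomes $O(p^{\epsilon n})\to 0$; for $p\ge 1/N$ the upper bound is the trivial $L_{N,n}\le n=cn$. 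This settles $\mathbb{P}(L_{N,n}>(c+\epsilon)n)\to 0$ in all regimes.

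The lower bound is the substantive half. Set $\ell=\lceil(c-\epsilon)n\rceil$ and look at the $N^{n-\ell}$ subtrees hanging below the vertices of level $n-\ell$; these are vertex-disjoint, hence independent, and each is a copy of a depth-$\ell$ $N$-ary tree, so it spans (contains an open root-to-leaf path, forcing $L_{N,n}\ge\ell$) with the same probability $q_\ell:=\mathbb{P}(\Theta_\ell\ge 1)$. By independence $\mathbb{P}(L_{N,n}<\ell)\le(1-q_\ell)^{N^{n-\ell}}\le\exp(-q_\ell N^{n-\ell})$, so it suffices to show $q_\ell N^{n-\ell}\to\infty$. I would bound $q_\ell$ from below by the second-moment inequality $q_\ell\ge \mathbb{E}(\Theta_\ell)^2/\mathbb{E}(\Theta_\ell^2)$. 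Computing $\mathbb{E}(\Theta_\ell^2)$ by splitting ordered root-to-leaf pairs according to their divergence level $j$ yields $\mathbb{E}(\Theta_\ell^2)=\sum_{j=0}^{\ell-1}(N-1)N^{2\ell-j-1}p^{2\ell-j+1}+N^\ell p^{\ell+1}$, a geometric sum in $Np$. In the subcritical range this gives $q_\ell\ge c_1(Np)^\ell$, so $q_\ell N^{n-\ell}\gtrsim N^n p^\ell=p^{-\epsilon n}\to\infty$; in the supercritical range $q_\ell$ is bounded below by a constant (consistent with Theorem~\ref{th1}) while $N^{n-\ell}=N^{\epsilon n}\to\infty$.

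The main obstacle is the critical case $p=1/N$, where the open cluster branches with mean offspring exactly $1$: here $q_\ell$ is no longer bounded below, and the single-subtree survival probability decays. The second-moment computation is sharp enough to save the argument, since at $p=1/N$ one finds $\mathbb{E}(\Theta_\ell)=1/N$ and $\mathbb{E}(\Theta_\ell^2)=\ell(N-1)/N^2+1/N$, giving $q_\ell\ge 1/(\ell(N-1)+N)=\Omega(1/n)$. The exponentially many independent subtrees then dominate this polynomial decay, $q_\ell N^{n-\ell}\gtrsim N^{\epsilon n}/n\to\infty$, closing the lower bound at criticality. Combining the two bounds gives $L_{N,n}/n\overset{p}{\to}c$, as required.
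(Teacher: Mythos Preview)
Your argument is correct, but the lower bound takes a genuinely different route from the paper. Both proofs establish the upper bound in the same way: a first-moment (Markov) bound on the count $T_{n,k}$ of open length-$k$ paths. For the lower bound, however, the paper works directly with $T_{n,k}$: it proves a combinatorial lemma (Lemma~\ref{le1}) bounding the number $a_m$ of ordered pairs of length-$k$ paths sharing exactly $m$ vertices, feeds this into a variance bound (Lemma~\ref{le2}) of the form $\operatorname{Var}(T_{n,k})\le C(p,N,k)\,\mathbb{E}(T_{n,k})$, and concludes via Chebyshev that $\mathbb{P}(T_{n,\tilde k_n}=0)\le \operatorname{Var}(T_{n,\tilde k_n})/\mathbb{E}(T_{n,\tilde k_n})^2\to 0$. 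You instead exploit the tree structure more directly: you slice the tree at level $n-\ell$ into $N^{n-\ell}$ independent depth-$\ell$ subtrees and bound $\mathbb{P}(L_{N,n}<\ell)\le (1-q_\ell)^{N^{n-\ell}}$, reducing everything to the already-computed second moment of $\Theta_\ell$ (root-to-leaf open paths). Your route is more elementary---it reuses the $\mathbb{E}(\Theta_\ell^2)$ computation from Theorem~\ref{th1} and avoids the intersection-counting of Lemma~\ref{le1} entirely---while the paper's direct variance bound on $T_{n,k}$ is heavier but has the advantage that the same machinery (overlap counts $a_m$) is later reused in Section~5 for the increasing-path problem.
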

	The rest of this paper is organised as follows. Sections 2-3 are devoted to the proof of our main results
	stated above. In section 4, we introduce
	the length of the longest increasing path when the fitness values are i.i.d. continuous random variables, which allows to give some asymptotic properties of the length of the path. Finallly, related results are proved in section 5.
\section{Proof of Theorem \ref{th1}}
\begin{proof}
We define indicator variables $\xi_i$ for each path $i\in \{1,2,\dots,N^n\}$ by whether the $i$-th path is open or not.
The $\xi_i$ are dependent random variables but identically distributed with
\bestar
\mathbb{E}(\xi_i)=\mathbb{E}(\xi_i^2)=\mathbb{P}(\xi_i=1)=p^{n+1}.
\eestar
This yields $\Theta_n=\sum\limits_{i=1}^{N^n}\xi_i$ and hence
\bestar
\mathbb{E}(\Theta_n^2)&=&\sum_{i=1}^{N^n}\mathbb{E}(\xi_i^2)+\sum_{i,j=1;i\neq j}^{N^n}\mathbb{E}(\xi_i\xi_j)=\sum_{i=1}^{N^n}\mathbb{E}(\xi_i)+\sum_{i,j=1;i\neq j}^{N^n}\mathbb{E}(\xi_i\xi_j)\\
&=&\mathbb{E}(\Theta_n)+\sum_{i,j=1;i\neq j}^{N^n}\mathbb{P}(\xi_i=1,\xi_j=1)\\
&=&\mathbb{E}(\Theta_n)+\sum_{k=1}^n(N-1)N^{n+k-1}p^{n+k+1}.
\eestar
Therefore 
\be
\mathbb{E}(\Theta_n^2)=\left\{
\begin{array}{ll}
 p+np(1-N^{-1}), & \mbox{if}~ p=1/N;\\
 \mathbb{E}(\Theta_n)+\frac{N-1}{Np}(\mathbb{E}(\Theta_n))^2+\frac{N-1}{N}\cdot\frac{N^{n+1}p^{n+2}(1-N^{n-1}p^{n-1})}{1-Np}, & \mbox{if}~ p\neq 1/N.\label{3}
\end{array}
\right.
\ee

We denote by $Q_n=1-\mathbb{P}(\Theta_n\ge 1)$ the probability that there is no path in the tree of level $n$. Since a tree of level $n+1$ can be thought of as a root which is connected to $N$-ary trees of level $n$, this leads to 
 \be
 Q_1=p(1-p)^N+1-p;\\
 Q_{n+1}=pQ_n^N+1-p.\label{dd}
 \ee
 Obviously $\{Q_n\}_{n=1,2,\dots}$ is an increasing sequence and has an upper bound of 1, it follows from the criteria of monotone bounded that this sequence has a limit. Then the $\mathbb{P}(\Theta_n\ge 1)$ also has a limit.

Notice that $\Theta_n$ is a random variable which takes only integer values and has a finite second moment. By (\ref{3}), we have the following inequality
\be
\mathbb{E}(\Theta_n)\ge\mathbb{P}(\Theta_n\ge 1)\ge \frac{(\mathbb{E}(\Theta_n))^2}{\mathbb{E}(\Theta_n^2)}=\left\{
\begin{array}{ll}
 \frac{p}{1+n(1-N^{-1})}, & \mbox{if}~ p=1/N;\\
 \frac{\mathbb{E}(\Theta_n)}{1+(N-1)\mathbb{E}(\Theta_n)/Np+S(N,p)}, & \mbox{if}~ p\neq 1/N.\label{4}
\end{array}
\right.
\ee
where 
\bestar
S(N,p)=\frac{N-1}{N}\cdot\frac{Np-N^np^n}{1-Np}.
\eestar
As $n\to\infty$, we have
 \bestar
\frac{N}{N-1}\cdot\frac{S(N,p)}{\mathbb{E}(\Theta_n)}=\frac{Np-(Np)^n}{(Np)^n(1-Np)p}=\frac{(Np)^{1-n}-1}{(1-Np)p}\left\{
 \begin{array}{ll}
 \le \frac{1}{(Np-1)p}, & \mbox{if}~ p>1/N;\\
 \to \infty, & \mbox{if}~ p<1/N.
 \end{array}
 \right.
 \eestar
 By combining this with (\ref{4}), we get
 \bestar
 \lim_{n\to\infty}\mathbb{P}(\Theta_n\ge 1)\left\{
 \begin{array}{ll}
  \ge(Np-1)/(N-1), & \mbox{if}~ p>1/N;\\
  =0, & \mbox{if}~ p<1/N.
 \end{array}
 \right.
 \eestar
For the case $p=1/N$, we define $Q=\lim\limits_{n\to\infty}Q_n$ and let
 \bestar
 f(x)=px^N-x+1-p, ~x\in[0,1].
 \eestar
 Note that $f'(x)=x^{N-1}-1\leq 0$ for $x\in[0,1]$, so $f(x)$ is monotonically nonincreasing. Furthermore,
 \bestar
 f(0)=1-p>0, f(1)=0.
 \eestar
 By applying (\ref{dd}), this implies $Q=1$. Therefore we complete the proof of Theorem \ref{th1}.
\end{proof}
	
	\section{Proof of Thereom \ref{th2}}
	In this section, $N\ge 2$ is a fixed positive integer.
	Let $\mathcal{P}_{n,k}$ be the set of   paths down  $\mathbb{T}^{(N)}_n$ with length $k$, it's clear that
	\be
	M:=\#\mathcal{P}_{n,k}=\sum_{j=0}^{n-k}N^{k+j}=\frac{N^{n+1}-N^{k}}{N-1}.\label{ee}
	\ee
	Define $T_{n,k}$ to be the number of open paths in $\mathcal{P}_{n,k}$:
	\bestar
	T_{n,k}=\sum_{P\in \mathcal{P}_{n,k}} \mathbb{I}\, (P ~\mbox{is  an open path down the tree}).
	\eestar
	It's clear that
	\be
	\mathbb{E}(T_{n,k})=\sum_{P\in\mathcal{P}_{n,k}}\mathbb{P}(P~is~an~open~path~down~the~tree)=Mp^{k+1}=\frac{N^{n+1}-N^{k}}{N-1}p^{k+1}.\notag
	\ee
	\begin{lemma}\label{le1}
		For any path $P\in\mathcal{P}_{n,k}$, we denote by $V(P)$ be the vertex set of $P$, and define
		\be
		a_m:=\#\lbrace (P,~\tilde{P}):~\#(V(P)\cap V(\tilde{P}))=m,~P,~ \tilde{P} \in\mathcal{P}_{n,k}\rbrace,
		\notag
		\ee
		then we have
		\be
		a_{m}\leq 2MN^{k-m+1},\quad\quad m=1,2,\cdots,k.
		\notag
		\ee
	\end{lemma}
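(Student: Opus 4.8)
The plan is to use the tree structure to constrain the shape of the overlap $V(P)\cap V(\tilde{P})$, and then to charge each pair to a single cleanly enumerable configuration. First I would record the basic geometric fact that when $\#(V(P)\cap V(\tilde{P}))=m\ge 1$ the overlap is a contiguous descending segment: a sub-path $S$ of $m$ vertices with a unique topmost vertex $u$ and bottommost vertex $w$. This holds because each of $V(P)$ and $V(\tilde{P})$ is a chain for the ancestor order, so their intersection is a chain; and since the unique tree-geodesic between two vertices of a descending path is contained in that path, the intersection is closed under passing to intermediate vertices and is therefore an interval.

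The decisive step, which accounts for the factor $2$, is a top-anchoring observation: at least one of $P,\tilde{P}$ must have $u$ as its own starting (highest) vertex. Indeed, if both extended strictly above $u$, then the vertex of each immediately above $u$ would be the unique parent of $u$, which would then belong to $V(P)\cap V(\tilde{P})=S$ and contradict that $u$ is the highest vertex of $S$. Letting $\mathcal{A}$ and $\mathcal{B}$ be the sets of pairs in which $P$, respectively $\tilde{P}$, starts at $u$, every counted pair lies in $\mathcal{A}\cup\mathcal{B}$, so $a_m\le|\mathcal{A}|+|\mathcal{B}|$; the involution exchanging $P$ and $\tilde{P}$ gives $|\mathcal{A}|=|\mathcal{B}|$, whence $a_m\le 2|\mathcal{A}|$.

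It then suffices to show $|\mathcal{A}|\le MN^{k-m+1}$. I would fix $P=\sigma_0\sigma_1\cdots\sigma_k$ (there are $M$ choices), noting that membership in $\mathcal{A}$ forces $S$ to be the top segment $\{\sigma_0,\dots,\sigma_{m-1}\}$. A competitor $\tilde{P}$ meeting $P$ in exactly this segment is built from $a\ge 0$ ancestors of $\sigma_0$ (uniquely determined by $a$), the segment itself, and a descending tail of $b=k+1-m-a$ vertices that must leave $P$ at once, i.e. whose first step avoids the child $\sigma_m$ of $\sigma_{m-1}$ ($N-1$ choices, available since $m\le k$) while its remaining $b-1$ steps are unconstrained. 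Summing the tail count $1+(N-1)\sum_{b=1}^{k+1-m}N^{b-1}=N^{k-m+1}$ over the admissible splits, and observing that the level restrictions $0\le|\tilde{\sigma}|\le n$ can only delete choices, the number of such $\tilde{P}$ is at most $N^{k-m+1}$ for each fixed $P$; hence $|\mathcal{A}|\le MN^{k-m+1}$ and $a_m\le 2MN^{k-m+1}$.

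I expect the main obstacle to be conceptual: avoiding the naive route of fixing $P$ and summing over all positions of the overlap inside $P$. That sum looks as though it overshoots $2MN^{k-m+1}$, because it tacitly allows descending tails that run off the bottom of $\mathbb{T}^{(N)}_n$, and honestly tracking the constraint $|\sigma_0|+k\le n$ there is delicate. The top-anchoring device is exactly what removes this difficulty: by charging every pair to whichever path carries the overlap at its very top, the inner enumeration collapses to the single geometric sum $N^{k-m+1}$, for which discarding the level constraints is a harmless over-count.
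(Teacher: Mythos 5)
Your proof is correct, but it takes a genuinely different route from the paper's. The paper follows exactly the ``naive route'' you flag: it fixes $P$ with starting level $j$, computes the number $B(m,j)$ of paths $\tilde{P}$ meeting $P$ in exactly $m$ vertices \emph{exactly}, via a three-way case analysis on $j$ to track boundary effects at the top and bottom of $\mathbb{T}^{(N)}_n$, and then evaluates $a_m=\sum_{j}N^{k+j}B(m,j)$ in closed form (using the identity for $\sum_i iq^i$) before comparing with $2MN^{k+1-m}$. Your diagnosis of why that route is delicate is accurate: the generic value $B(m,j)=(k-m)(N-1)N^{k-m}+2N^{k-m+1}$ multiplied by $M$ would overshoot the bound, and the paper's computation only closes because the geometric weights $N^{k+j}$ concentrate the sum on paths starting near the bottom of the tree, where the truncated counts are smaller. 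Your top-anchoring observation (at least one of the two paths must carry the topmost overlap vertex as its own starting vertex, since otherwise the parent of that vertex would lie in the intersection), combined with the swap involution giving $a_m\le 2|\mathcal{A}|$, eliminates both the case analysis and the closed-form summation, reducing everything to the single sum $1+(N-1)\sum_{b=1}^{k+1-m}N^{b-1}=N^{k+1-m}$, with level constraints discarded harmlessly since they only remove choices. What the paper's heavier computation buys in exchange is reusability: the explicit expressions for $B(m,j)$ derived inside its proof of Lemma \ref{le1} are invoked again in the proof of Lemma \ref{le5} (to bound $\max_P|N_P|$ via $\max_j B(m,j)$), so if your proof replaced the paper's, that later step would need its own short counting argument; your argument, on the other hand, is shorter, conceptually cleaner, and less error-prone, since it never needs the exact boundary-corrected counts.
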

	\begin{proof}
		Given a path $P=\sigma_0\sigma_1\sigma_2\cdots\sigma_k\in\mathcal{P}_{n,k}$ with $|\sigma_0|=j$, let $B(m,j)$ denote the number of the path $\tilde{P}\in\mathcal{P}_{n,k}$ which intersects $P$ $m(1\le m\le k+1)$ vertices, then 
		\be
		a_m=\sum_{j=0}^{n-k}N^{k+j}B(m,j).\label{5}
		\ee
		For $m=k+1$, it's clear that $B(m,j)=1$ and $a_{k+1}=M$.
		For $1\leq m\leq k$, when $k-m+1\leq j\leq n-2k+m-1$, we have
		\bestar
		B(m,j)&=&(k-m+1)(N-1)N^{k-m}+N^{k-m+1}+\sum_{i=1}^{k-m}(N-1)N^{k-m-i}+1\\
		&=&(k-m+1)(N-1)N^{k-m}+N^{k-m+1}+N^{k-m}\\
		&=&(k-m)(N-1)N^{k-m}+2N^{k-m+1};
		\eestar
		when $0\leq j\leq k-m$,
		\bestar
		B(m,j)&=&(k-m+1)(N-1)N^{k-m}+N^{k-m+1}+\sum_{i=1}^{j}(N-1)N^{k-m-i}\\
		&=&(k-m+1)(N-1)N^{k-m}+N^{k-m+1}+N^{k-m}-N^{k-m-j}\\
		&=&(k-m)(N-1)N^{k-m}+2N^{k-m+1}-N^{k-m-j};
		\eestar
		similarly, when $n-2k+m\leq j\leq n-k$,
		\bestar
		B(m,j)=(n-k-j)(N-1)N^{k-m}+N^{k-m+1}.
		\eestar
		Those together with (\ref{5}), imply that 
		\be
		\begin{aligned}
			a_{m}& =\sum_{j=0}^{k-m}N^{k+j}B(m,j)+\sum_{j=k-m+1}^{n-2k+m-1}N^{k+j}B(m,j)+\sum_{j=n-2k+m}^{n-k}N^{k+j}B(m,j)\\
			& =\sum_{j=0}^{n-k}N^{2k-m+j+1}+\sum_{j=0}^{n-2k+m-1}\Big[(k-m)(N-1)N^{2k-m+j}+N^{2k-m+1+j}\Big]\\
			& \quad\quad+\sum_{j=n-2k+m}^{n-k}(n-k-j)(N-1)N^{2k-m+j}-\sum_{j=0}^{k-m}N^{2k-m}\\
			& =\frac{N^{n+k-m+2}-N^{2k-m+1}}{N-1}+(k-m)(N^n-N^{2k-m})+\frac{N^{n+1}-N^{2k-m+1}}{N-1}\\
			& \quad\quad+\sum_{j=1}^{k-m}(N-1)jN^{n+k-m-j}-(k-m+1)N^{2k-m}\\
			& =\frac{N^{n+k-m+2}-N^{2k-m+1}}{N-1}+\frac{N^{n+k-m+1}-N^{2k-m+1}}{N-1}-(2k-2m+1)N^{2k-m}\\
			&\leq 2MN^{k+1-m},
		\end{aligned}
		\notag
		\ee
		where we have used the formula $\sum\limits_{i=1}^niq^i=\frac{q(1-q^n)}{(1-q)^2}-\frac{nq^{n+1}}{1-q}.$
		Thus completing the proof of Lemma \ref{le1}.
	\end{proof}
	\begin{lemma}\label{le2}
		For any $N\geq 2$,
		\be
		Var(T_{n,k})\leq
		\begin{cases}
			2(1-Np)^{-1}\mathbb{E}(T_{n,k}),&\text{if $0<p<1/N$};\\
			2(k+1)\mathbb{E}(T_{n,k}),&\text{if $p=1/N$};\\
			2N^{k+1}p^{k+1}(Np-1)^{-1}\mathbb{E}(T_{n,k}),&\text{if $1/N<p<1$}.\\
		\end{cases}
		\notag
		\ee
	\end{lemma}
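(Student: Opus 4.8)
The plan is to obtain the variance through a direct second–moment computation, organising the ordered pairs of paths according to how many vertices they share, exactly as counted by $a_m$ in Lemma \ref{le1}. Since each $P\in\mathcal{P}_{n,k}$ has $k+1$ vertices, two paths $P,\tilde{P}$ with $\#(V(P)\cap V(\tilde{P}))=m$ together cover $2(k+1)-m=2k+2-m$ distinct vertices, and because the fitness values are i.i.d. the event that both paths are open has probability $p^{2k+2-m}$. Hence
\bestar
\mathbb{E}(T_{n,k}^2)=\sum_{P,\tilde{P}\in\mathcal{P}_{n,k}}\mathbb{P}(P,\tilde{P}~\mbox{both open})=\sum_{m=0}^{k+1}a_m\,p^{2k+2-m}.
\eestar

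First I would subtract the square of the mean. Using $\sum_{m=0}^{k+1}a_m=M^2$ (the total number of ordered pairs) together with $(\mathbb{E}(T_{n,k}))^2=M^2p^{2k+2}$, one gets
\bestar
Var(T_{n,k})=\sum_{m=0}^{k+1}a_m\big(p^{2k+2-m}-p^{2k+2}\big).
\eestar
The key observation is that the $m=0$ term, encoding independence of disjoint paths, vanishes identically, so only $m=1,\dots,k+1$ survive; estimating $p^{2k+2-m}-p^{2k+2}\le p^{2k+2-m}$ then yields $Var(T_{n,k})\le\sum_{m=1}^{k+1}a_m p^{2k+2-m}$.

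Next I would insert Lemma \ref{le1}. The diagonal term $m=k+1$ has $a_{k+1}=M$ and contributes exactly $Mp^{k+1}=\mathbb{E}(T_{n,k})$, while for $1\le m\le k$ the bound $a_m\le 2MN^{k+1-m}$ gives $a_m p^{2k+2-m}\le 2\mathbb{E}(T_{n,k})(Np)^{k+1-m}$, since $N^{k+1-m}p^{2k+2-m}=(Np)^{k+1-m}p^{k+1}$. Re-indexing by $j=k+1-m$ collapses everything to
\bestar
Var(T_{n,k})\le \mathbb{E}(T_{n,k})\Big(1+2\sum_{j=1}^{k}(Np)^j\Big),
\eestar
after which it only remains to evaluate the geometric sum in the three regimes $Np<1$, $Np=1$, $Np>1$.

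The routine but slightly delicate part is extracting the stated constants. For $Np=1$ the sum equals $k$, giving $(2k+1)\mathbb{E}(T_{n,k})\le 2(k+1)\mathbb{E}(T_{n,k})$; for $Np<1$ I would bound $1+2\sum_{j\ge1}(Np)^j\le 2\sum_{j\ge0}(Np)^j=2/(1-Np)$. The case I expect to require the most care is $Np>1$, where the finite sum gives $\frac{2(Np)^{k+1}-Np-1}{Np-1}$, and one must notice that the subtracted $Np+1$ is precisely what lets this be absorbed into $\frac{2(Np)^{k+1}}{Np-1}=\frac{2N^{k+1}p^{k+1}}{Np-1}$. The main obstacle throughout is bookkeeping rather than ideas: getting the joint open exponent $2k+2-m$ right, confirming the disjoint term cancels, and treating the diagonal $a_{k+1}$ separately, since Lemma \ref{le1} only covers $1\le m\le k$.
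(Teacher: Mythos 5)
Your proposal is correct and follows essentially the same route as the paper: both reduce the variance to $\sum_{m\ge 1} a_m p^{2k+2-m}$ by noting that vertex-disjoint pairs contribute nothing, then apply Lemma \ref{le1} and sum the geometric series in $Np$ to get the three regimes. The only (harmless) difference is bookkeeping: you treat the diagonal term $a_{k+1}=M$ separately, whereas the paper absorbs it into the uniform bound $a_m\le 2MN^{k+1-m}$ for all $m\le k+1$, which is slightly cruder but yields the same constants.
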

	\begin{proof}
		We say that two paths $P$ and $\tilde{P}$ are vertex-disjoint if $V(P)\cap V(\tilde{P})=\emptyset$. Note that $\mathbb{I}(P~is~an~open~path~down~the~tree)$ and $\mathbb{I}(\tilde{P}~is~an~open~path~down~the~tree)$ are independent if $P$ and $\tilde{P}$ are vertex-disjoint. Then it follows from Lemma \ref{le1} that
		\be
		\begin{aligned}
			Var(T_{n,k})&=\sum_{P,\tilde{P}\in\mathcal{P}_{n,k}}(\mathbb{P}(P~and~\tilde{P}~are~open~paths)-\mathbb{P}(P~is~an~open~path)^2) \\
			&\leq\sum_{V(P)\cap V(\tilde{P})\neq\emptyset\atop P,\;\tilde{P}\in\mathcal{P}_{n,k}}\mathbb{P}(P~and~\tilde{P}~are~open~paths)\\
			&=\sum_{m=1}^{k+1}\sum_{|V(P)\cap V(\tilde{P})|=m\atop P,\;\tilde{P}\in\mathcal{P}_{n,k}}\mathbb{P}(P~and~\tilde{P}~are~open~paths)\\
			&=\sum_{m=1}^{k+1}a_mp^{2k+2-m}\leq\sum_{m=1}^{k+1}2MN^{k-m+1}p^{2k+2-m},
		\end{aligned}
		\notag
		\ee
		which yields Lemma \ref{le2}.
	\end{proof}
	
	\begin{proof}[Proof of Theorem \ref{th2}]
		When $0<p<1/N$, for any  sequence $k_n\to\infty$, 
		\be
		\begin{aligned}
			\mathbb{E}(T_{n,k_n})&=\frac{N^{n+1}-N^{k_n}}{N-1}p^{k_n+1}=\exp{\Big\{ k_n\log{p}+n\log{N}+O(1)\Big\}}.\label{6}
		\end{aligned}
		\ee
		For any $\epsilon>0$, we take $k_n=[(-\frac{\log{N}}{\log{p}}+\epsilon)n]$ and $\tilde{k}_n=[(-\frac{\log{N}}{\log{p}}-\epsilon)n]$. Then, by applying (\ref{6}), we have
		\be
		\mathbb{P}(L_{N,n}\geq k_n)=\mathbb{P}(T_{n,k_n}\geq 1)\leq\mathbb{E}(T_{n,k_n})\to 0,\quad n\to\infty,\label{7}
		\ee
		and furthermore, by Lemma \ref{le2} and Chebyshev's inequality,
		\be
		\begin{aligned}
			\mathbb{P}(L_{N,n}<\tilde{k}_n)&=\mathbb{P}(T_{n,\tilde{k}_n}=0)\leq\mathbb{P}(|T_{n,\tilde{k}_n}-\mathbb{E}(T_{n,\tilde{k}_n})|\geq\mathbb{E}(T_{n,\tilde{k}_n}))\\
			&\leq\frac{Var(T_{n,\tilde{k}_n})}{(\mathbb{E}(T_{n,\tilde{k}_n}))^2}
			\leq\frac{2}{(1-Np)\mathbb{E}(T_{n,\tilde{k}_n})}\to 0,\quad n\to\infty.\label{8}
		\end{aligned}
		\ee
		When $p\ge1/N$, let $c_n=[(1-\epsilon)n]$, by same method, as $n\to\infty$, we have
		\be
			\begin{aligned}
		\mathbb{P}(L_{N,n}< c_n)&=\mathbb{P}(T_{n,c_n}=0)\leq\frac{Var(T_{n,c_n})}{(\mathbb{E}(T_{n,c_n}))^2}\\
	&	\le\left\{
		\begin{array}{ll}
		\frac{2(c_n+1)}{\mathbb{E}(T_{n,c_n})}\to 0, & \mbox{if}~ p=1/N;\\
		\frac{2N^{c_n+1}p^{c_n+1}}{(Np-1)\mathbb{E}(T_{n,c_n})}\to 0, & \mbox{if}~ p >1/N.\label{9}
		\end{array}
		\right.
			\end{aligned}
		\ee
		Hence, from (\ref{7}), (\ref{8}) and (\ref{9}), we can get Theorem 1.
	\end{proof}
	
	\section{The longest increasing path}
	When the fitness values are i.i.d. continuous random variables, we say that the path $P=\sigma_0\sigma_1\cdots\sigma_k$ with length $l(P)=k$ is an increasing path if $X_{\sigma0}<X_{\sigma1}<\cdots<X_{\sigma k}$.
	Nowak and Krug called this accessibility percolation and derived an asymptotically exact expression for the probability that there is at least one accessible path from the root to the leaves in \cite{ref17}. In the evolutionary biology literature, these increasing paths are known as selectively accessible. The probability that there exists an incresaing path from the root to the leaves is discussed in \cite{ref17,ref19}.

Besides, the number of increasing paths on other graphs has been studied extensively. It has been considered on the $N$-dimensional binary hypercube $\{0,1\}^N$ in \cite{ref20,ref21,ref22}. On infinite spherically symmetric trees, the increasing path has also been studied in \cite{ref23}.

	Since we only care about whether the fitness values along the path are in increasing order, as long as the random variables are continuous, changing the precise distribution will not influence the results. Without loss of generality, we assume that all the random variables are mutually independent and uniformly distributed on $[0,1]$, then we can consider the length of the longest increasing path, which can be defined as
	\be
	\tilde{L}_{N,n}=\max\lbrace l(P):\; P~is~an~increasing~path~down~the~tree\rbrace.
	\notag
	\ee
	
	It is studied in \cite{ref5} that the limit distribution of $\tilde{L}_{N,n}$ for $N=1$, and they obtained that
	\be
	\lim_{n\to\infty}\mathbb{P}([f_n]-1\leq \tilde{L}_{1,n}\leq[f_n]+1)=1,\label{11}
	\ee
	where $f_n=\frac{\log{n}}{b_n}-\frac{1}{2}$ and $b_n$ is the solution of the equation $b_ne^{b_n}=e^{-1}\log {n}$. Our result is as follows:
	\begin{thm}\label{th3}
		For any $N\geq 2$, 
		\be
		\lim_{n\to\infty}\mathbb{P}([f_{N,n}]-1\leq \tilde{L}_{N,n}\leq[f_{N,n}]+1)=1,
		\notag
		\ee
		where $f_{N,n}=\frac{n\log {N}}{b_{N,n}}-\frac{1}{2}$ and $b_{N,n}$ is the solution of the equation $b_{N,n}e^{b_{N,n}}=e^{-1}n\log {N}$.
	\end{thm}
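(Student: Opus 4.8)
The plan is to mirror the first‑ and second‑moment scheme used for Theorem \ref{th2}, but now for the counting variable $\tilde T_{n,k}$ equal to the number of increasing paths of length exactly $k$ down $\mathbb{T}^{(N)}_n$. Since a fixed path of $k+1$ vertices carries $k+1$ i.i.d.\ continuous fitnesses, it is increasing with probability $1/(k+1)!$, so by the same path count $M=(N^{n+1}-N^{k})/(N-1)$ as in (\ref{ee}) one gets $\mathbb{E}(\tilde T_{n,k})=M/(k+1)!$. A sub‑path of an increasing path is again increasing, hence $\{\tilde L_{N,n}\ge k\}=\{\tilde T_{n,k}\ge 1\}$, and the theorem reduces to the two tail estimates $\mathbb{P}(\tilde T_{n,[f_{N,n}]+2}\ge 1)\to 0$ and $\mathbb{P}(\tilde T_{n,[f_{N,n}]-1}=0)\to 0$.

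For the upper tail I would use $\mathbb{P}(\tilde T_{n,k}\ge 1)\le \mathbb{E}(\tilde T_{n,k})$ together with a Stirling expansion of $\log (k+1)!$. Writing $m=n\log N/b_{N,n}$, the defining relation $b_{N,n}e^{b_{N,n}}=e^{-1}n\log N$ is equivalent to $m\log m-m=n\log N$, so that $f_{N,n}=m-\tfrac12$ and Stirling yields $\log\mathbb{E}(\tilde T_{n,k})\approx \log\tfrac{N}{N-1}-\tfrac12\log 2\pi-(k+1-m+\tfrac12)\log m$. Thus $\mathbb{E}(\tilde T_{n,k})$ falls by a factor $\approx m\to\infty$ for each unit increase of $k$, which makes $\mathbb{E}(\tilde T_{n,[f_{N,n}]+2})\to 0$ immediate and pins $\tilde L_{N,n}$ to the integers straddling the level at which $\mathbb{E}(\tilde T_{n,k})\approx 1$.

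The lower tail is where the real work lies. I would bound $\mathbb{P}(\tilde T_{n,k}=0)\le \mathrm{Var}(\tilde T_{n,k})/(\mathbb{E}(\tilde T_{n,k}))^2$ and seek analogues of Lemmas \ref{le1}--\ref{le2}. The geometric input is the same: the intersection of two downward paths is a contiguous block, and since the top vertex of that block has a unique parent, at most one of the two paths can extend above the block, while below it they descend into distinct children. For two increasing paths sharing such a block of $m$ vertices, the joint probability that both are increasing is the number of linear extensions of the induced order divided by $(2k+2-m)!$; carrying out this count gives $\mathbb{P}(\text{both increasing})=\binom{\beta_P+\beta_{\tilde P}}{\beta_P}/(2k+2-m)!$, where $\beta_P,\beta_{\tilde P}$ are the lengths of the two sub‑paths hanging below the block. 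Using $\binom{\beta_P+\beta_{\tilde P}}{\beta_P}\le 4^{\,k+1-m}$ together with the counting bound $a_m\le 2MN^{\,k+1-m}$ of Lemma \ref{le1}, the off‑diagonal part of the variance is controlled by a sum $\sum_{r\ge 1}\tfrac{(k+1)!}{M}\,(4N/(k+2))^{r}\to 0$, so the only surviving contribution is the diagonal term $1/\mathbb{E}(\tilde T_{n,k})$.

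Consequently the crux is to verify $\mathbb{E}(\tilde T_{n,[f_{N,n}]-1})\to\infty$, and here the $-\tfrac12$ in $f_{N,n}$ is essential: it places the threshold $\mathbb{E}\approx 1$ at $k\approx f_{N,n}-1$, so that at $k=[f_{N,n}]-1$ the first moment equals $\exp\{\{f_{N,n}\}\log m+O(1)\}$, with $\{f_{N,n}\}$ the fractional part of $f_{N,n}$. This is the delicate point, entirely parallel to the head‑run case (\ref{2}): the location of the threshold oscillates with $n$ on the scale of a single integer, and it is precisely the width‑three window $[f_{N,n}]-1\le\tilde L_{N,n}\le[f_{N,n}]+1$, rather than a two‑point concentration, that absorbs this oscillation and gives probability $\to 1$. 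I expect this threshold bookkeeping near the boundary integer, rather than the covariance estimate, to be the main obstacle, and I would dispose of the borderline values of $\{f_{N,n}\}$ by the same refinement used for $N=1$ in \cite{ref5}.
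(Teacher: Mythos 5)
Your skeleton (Markov for the upper tail, Chebyshev plus an intersection count for the lower tail) is internally sound as far as it goes; in particular your linear-extension formula $\binom{\beta_P+\beta_{\tilde P}}{\beta_P}\big/(2k+2-m)!$ for two overlapping paths is correct, and your variance bound is the analogue of the paper's Lemma \ref{le5}. The genuine gap sits exactly at what you call the crux: the claim $\mathbb{E}(\tilde T_{n,[f_{N,n}]-1})\to\infty$ is false in general. By your own bookkeeping, $\mathbb{E}(\tilde T_{n,[f_{N,n}]-1})=\exp\{\{f_{N,n}\}\log m+O(1)\}$ with $m=f_{N,n}+\tfrac12$. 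Since $f_{N,n+1}-f_{N,n}\approx\log N/\log f_{N,n}\to 0$, there are infinitely many $n$ with $\{f_{N,n}\}\log f_{N,n}\le\log N$; along a subsequence with $\{f_{N,n}\}\log f_{N,n}\to a<\infty$ this mean converges to the finite constant $c=Ne^{a}/((N-1)\sqrt{2\pi})$. In that regime the second-moment method is structurally powerless: Chebyshev gives $\mathbb{P}(\tilde T=0)\le \mathrm{Var}/\lambda^{2}\approx 1/\lambda$, which stays bounded away from $0$. Worse, the estimate you reduced the theorem to is then actually false, not merely hard: by Chen--Stein (the paper's Lemma \ref{le5}), $\mathbb{P}(\tilde T_{n,[f_{N,n}]-1}=0)\to e^{-c}>0$. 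So no refinement can close your argument as written, and your escape route --- ``the same refinement used for $N=1$ in \cite{ref5}'' --- is precisely (compound) Poisson approximation, i.e.\ the method the paper uses and you set out to avoid, and even it establishes a window shifted by one, not yours.

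The root cause is the alignment of the window with the threshold, which you assert the width-three window absorbs; it does not, under the reduction $\{\tilde L_{N,n}\ge k\}=\{\tilde T_{n,k}\ge 1\}$ that you (correctly, given the paper's edge-counting definition $l(P)=k$ for $P=\sigma_0\cdots\sigma_k$) employ. The threshold $\lambda\approx 1$ sits at $k\approx f_{N,n}-1$, so the left edge $[f_{N,n}]-1$ of the window lies only $\{f_{N,n}\}$ below it, and the fractional-part oscillation is absorbed only by the window $\{[f_{N,n}]-2,\dots,[f_{N,n}]\}$, whose left edge lies $1+\{f_{N,n}\}$ below threshold and carries a mean $\asymp m^{1+\{f_{N,n}\}}\to\infty$ for every $n$. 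This is in fact how the paper's own proof closes: in (\ref{19}) it pairs $\mathbb{P}(\tilde L_{N,n}<[f_{N,n}]-1)$ with the mean $N^{n+1}/\big((N-1)([f_{N,n}]-1)!\big)$, i.e.\ with the count of increasing paths on $[f_{N,n}]-1$ \emph{vertices}, silently shifting the index by one relative to its stated identity $\mathbb{P}(\tilde L_{N,n}<k)=\mathbb{P}(\tilde T_{n,k}=0)$; only under that vertex-counting reading does Lemma \ref{le4} apply with $x\le -1<0$ and give divergence for all $n$. If you realign your lower-tail index accordingly (run the second-moment bound on $\tilde T_{n,[f_{N,n}]-2}$), your elementary first/second-moment argument does go through and is a genuinely simpler alternative to the paper's Chen--Stein route for the theorem itself; but it cannot reach Corollaries \ref{co1} and \ref{co2}, where $\lambda$ remains bounded and the two-sided limit $\mathbb{P}(\tilde T=0)=e^{-\lambda}+o(1)$ supplied by Poisson approximation is indispensable.
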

	\begin{corollary}\label{co1}
		Let $f_{N,n}$ be defined in Theorem \ref{th3} and $\lbrace f_{N,n}\rbrace=f_{N,n}-[f_{N,n}]$ be the fractional part of $f_{N,n}$. Assume that $(n_k,k\geq 1)$ and $(n'_k,k\geq 1)$ are subsequences satisfying that
		$$\limsup_{k\to\infty}\lbrace f_{N,n_k}\rbrace<1,\quad\quad\liminf_{k\to\infty}\lbrace f_{N,n'_k}\rbrace>0.$$
		Then
		\be
		\begin{aligned}
			&\lim_{k\to\infty}\mathbb{P}([f_{N,n_k}]-1\leq \tilde{L}_{N,n_k}\leq[f_{N,n_k}])=1;\label{12}
		\end{aligned}
		\ee
		\be
		\begin{aligned}
			&\lim_{k\to\infty}\mathbb{P}([f_{N,n'_k}]\leq \tilde{L}_{N,n'_k}\leq[f_{N,n'_k}]+1)=1.\label{13}
		\end{aligned}
		\ee
		Furthermore, if $0<\liminf_{k\to\infty}\lbrace f_{N,n''_k}\rbrace\leq \limsup_{k\to\infty}\lbrace f_{N,n''_k}\rbrace<1$ holds for the subsequence $(n''_k,k\geq 1)$, then
		\be
		&\lim_{k\to\infty}\mathbb{P}(\tilde{L}_{N,n''_k}=[f_{N,n''_k})=1.\label{14}
		\ee
	\end{corollary}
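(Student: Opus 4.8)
The plan is to take the two one-sided moment estimates behind Theorem~\ref{th3} and sharpen each by one level, to the values $[f_{N,n}]$ and $[f_{N,n}]+1$, where the outcome is decided by the fractional part $\{f_{N,n}\}$. Write $\tilde T_{n,k}$ for the number of increasing paths of length $k$ down $\mathbb{T}^{(N)}_n$; by the path count \eqref{ee} and exchangeability, $\mathbb{E}(\tilde T_{n,k})$ equals $\tfrac{N^{n+1}-N^{k}}{N-1}$ times the probability that a fixed length-$k$ path is increasing, and Stirling turns this into $\log\mathbb{E}(\tilde T_{n,k})=n\log N-k\log k+k+O(\log k)$ in the range $k=o(n)$. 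The numbers $b_{N,n}$ and $f_{N,n}$ are calibrated---through $b_{N,n}e^{b_{N,n}+1}=n\log N$, i.e.\ the leading balance $k\log k-k=n\log N$ at $k=e^{b_{N,n}+1}$, together with the correction $\tfrac12\log(2\pi k)\asymp\tfrac12 b_{N,n}$ absorbed into the half-integer $f_{N,n}=e^{b_{N,n}+1}-\tfrac12$---exactly so that $\mathbb{E}(\tilde T_{n,k})$ is of order $1$ at $k\approx f_{N,n}$, which is what centres the window of Theorem~\ref{th3} at $[f_{N,n}]$. Consequently, for integers $k$ within $O(1)$ of $f_{N,n}$,
\[
\log\mathbb{E}(\tilde T_{n,k})=-(b_{N,n}+1)\,(k-f_{N,n})+O(1),
\]
and since the per-level slope $b_{N,n}+1\to\infty$, the sign of the exponent at the boundary levels is dictated by $\{f_{N,n}\}$: at level $[f_{N,n}]+1$ it is $-(b_{N,n}+1)(1-\{f_{N,n}\})$, so $\mathbb{E}(\tilde T_{n,[f_{N,n}]+1})\to0$ as soon as $1-\{f_{N,n}\}$ stays bounded away from $0$; at level $[f_{N,n}]$ it is $+(b_{N,n}+1)\{f_{N,n}\}$, so $\mathbb{E}(\tilde T_{n,[f_{N,n}]})\to\infty$ as soon as $\{f_{N,n}\}$ stays bounded away from $0$.

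For \eqref{12}, the hypothesis $\limsup_k\{f_{N,n_k}\}<1$ keeps $1-\{f_{N,n_k}\}$ bounded away from $0$, so Markov's inequality gives $\mathbb{P}(\tilde L_{N,n_k}\ge[f_{N,n_k}]+1)=\mathbb{P}(\tilde T_{n_k,[f_{N,n_k}]+1}\ge1)\le\mathbb{E}(\tilde T_{n_k,[f_{N,n_k}]+1})\to0$, i.e.\ $\tilde L_{N,n_k}\le[f_{N,n_k}]$ with probability tending to $1$; combined with the lower bound $\tilde L_{N,n_k}\ge[f_{N,n_k}]-1$ from Theorem~\ref{th3}, this is \eqref{12}. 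For \eqref{13}, the hypothesis $\liminf_k\{f_{N,n'_k}\}>0$ gives $\mathbb{E}(\tilde T_{n'_k,[f_{N,n'_k}]})\to\infty$, which I would feed into a second-moment argument at level $[f_{N,n'_k}]$: using that $\tilde L_{N,n}\ge k$ whenever some length-$k$ path is increasing, a variance bound $\mathrm{Var}(\tilde T_{n,k})\le(1+o(1))\mathbb{E}(\tilde T_{n,k})$ yields, via Chebyshev, $\mathbb{P}(\tilde T_{n,k}=0)\le(1+o(1))/\mathbb{E}(\tilde T_{n,k})\to0$, so $\tilde L_{N,n'_k}\ge[f_{N,n'_k}]$ with probability tending to $1$; together with the upper bound $\tilde L_{N,n'_k}\le[f_{N,n'_k}]+1$ from Theorem~\ref{th3}, this is \eqref{13}.

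Statement \eqref{14} is then immediate: the hypothesis $0<\liminf_k\{f_{N,n''_k}\}\le\limsup_k\{f_{N,n''_k}\}<1$ makes both one-sided conditions hold simultaneously along $(n''_k)$, so the bound for \eqref{12} gives $\tilde L_{N,n''_k}\le[f_{N,n''_k}]$ and the bound for \eqref{13} gives $\tilde L_{N,n''_k}\ge[f_{N,n''_k}]$, each with probability tending to $1$; intersecting the two events forces $\tilde L_{N,n''_k}=[f_{N,n''_k}]$ with probability tending to $1$. The one-sidedness of the conditions is essential here: it is precisely $1-\{f_{N,n}\}$ (resp.\ $\{f_{N,n}\}$) being bounded away from $0$ that lets the diverging factor $b_{N,n}+1$ push the relevant expectation to $0$ (resp.\ to $\infty$), and when the fractional part creeps to the opposite endpoint the exponent stays $O(1)$ and no conclusion is forced.

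I expect the variance estimate for \eqref{13} to be the main obstacle. In the Bernoulli setting the overlap count of Lemma~\ref{le1} feeds a bound $\mathrm{Var}\le(\text{slowly growing})\cdot\mathbb{E}$ (Lemma~\ref{le2}), which suffices there because $\mathbb{E}$ is exponentially large in $n$; here, however, $\mathbb{E}(\tilde T_{n,[f_{N,n}]})$ may be only of order $e^{(b_{N,n}+1)\{f_{N,n}\}}$ while $k+1\asymp e^{b_{N,n}}$, so a crude factor of $k+1$ in front of $\mathbb{E}$ would ruin the Chebyshev bound and one genuinely needs the sharp Poisson-type estimate $\mathrm{Var}\le(1+o(1))\mathbb{E}$. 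A clean route is to restrict $\tilde T_{n,k}$ to paths terminating at the bottom level $n$ (equivalently, starting at level $n-k$): distinct such starting vertices have disjoint descendant subtrees, so all cross-subtree covariances vanish and the second-moment ratio is controlled solely by overlaps of increasing paths sharing a common prefix inside a single depth-$k$ subtree, while the restricted mean stays of the same exponential order as $\mathbb{E}(\tilde T_{n,k})$. Estimating those shared-prefix overlaps---where the joint probability that two paths sharing a length-$m$ segment are both increasing is a factorial expression coupling the common increasing run to its two extensions, rather than the single power $p^{2k+2-m}$ of the Bernoulli case---is the technical heart of the lower bound, and hence of the corollary.
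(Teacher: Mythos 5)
Your top-level architecture matches the paper's: prove the two one-sided refinements ($\tilde L_{N,n_k}\le[f_{N,n_k}]$ under the first hypothesis, $\tilde L_{N,n'_k}\ge[f_{N,n'_k}]$ under the second, each with probability tending to one), intersect each with the two-sided window of Theorem \ref{th3} to get (\ref{12}) and (\ref{13}), and intersect both to get (\ref{14}). The tools, however, are genuinely different. The paper runs \emph{both} directions through the Chen--Stein estimate (\ref{18}) coming from Lemma \ref{le5}, combined with Lemma \ref{le4}, which shows that the Poisson mean $\lambda$ tends to $0$ at level $[f_{N,n_k}]+1$ when $\limsup_k\{f_{N,n_k}\}<1$ and to $+\infty$ at level $[f_{N,n'_k}]$ when $\liminf_k\{f_{N,n'_k}\}>0$; then $\mathbb{P}(\tilde L<[f]+1)\to1$ and $\mathbb{P}(\tilde L<[f])\to0$ both drop out of $|\mathbb{P}(\tilde L<k)-e^{-\lambda}|\le D(N,k)\to0$. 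Your Markov-inequality treatment of (\ref{12}) is correct and is in fact more elementary than the paper's: when the mean vanishes, no Poisson approximation is needed, only the first moment plus the Stirling calibration of $f_{N,n}$ (which is exactly the content of the paper's Lemma \ref{le4}). Given (\ref{12}) and (\ref{13}), your derivation of (\ref{14}) is the same as the paper's and is fine.

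The genuine gap is in (\ref{13}). You need $\mathbb{P}(\tilde T_{n'_k,[f_{N,n'_k}]}=0)\to0$, and your Chebyshev route stands or falls with a variance bound $\mathrm{Var}(\tilde T_{n,k})\le C\,\mathbb{E}(\tilde T_{n,k})$ with $C$ an absolute constant: as you yourself observe, a Lemma \ref{le2}-style factor of $k+1$ is fatal here, because under the hypothesis $\mathbb{E}(\tilde T)$ may only be of size $e^{c\,b_{N,n}}$ with $c<1$ while $k\asymp e^{b_{N,n}}$. You never prove this bound; you only sketch it. The sketch is viable (paths started at distinct level-$(n-k)$ vertices are vertex-disjoint, hence independent, and the restricted mean keeps the right order), but the quantity it reduces to, the shared-prefix sum $\sum_{s}a_s\,\frac{(2k+2-2s)!}{(2k+2-s)!\,((k+1-s)!)^{2}}$, is precisely what the paper bounds in the $H_2$ part of the proof of Lemma \ref{le5} via the ratio analysis of the terms $b_s$, giving a contribution of order $N\lambda/k$. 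So the step you defer as ``the technical heart'' is the entire remaining content of the lower bound, and it is not lighter than the paper's version; the quickest repair is to drop Chebyshev and quote (\ref{18}) at level $[f_{N,n'_k}]$, since $\lambda\to+\infty$ and $D(N,[f_{N,n'_k}])\to0$ give $\mathbb{P}(\tilde L_{N,n'_k}<[f_{N,n'_k}])\le e^{-\lambda}+D\to0$, which is the paper's (omitted, ``along the same lines'') argument. One bookkeeping caveat: your expansion $\log\mathbb{E}(\tilde T_{n,k})=-(b_{N,n}+1)(k-f_{N,n})+O(1)$ corresponds to a mean with $k!$ rather than the $(k+1)!$ of the paper's (\ref{ff}); with $(k+1)!$ the exponent is $-(k-f_{N,n}+1)b_{N,n}+O(1)$, shifting the critical level by one. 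The paper carries the same inconsistency (its displays (\ref{19}) and (\ref{20}) substitute factorials one level low), so your convention agrees with what the paper actually uses, but a self-contained write-up must fix one convention and re-verify that $[f]+1$ and $[f]$ are the correct levels under it.
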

	\begin{corollary}\label{co2}
		Let $f_{N,n}$ and $\lbrace f_{N,n}\rbrace$ be defined in Corollary 1. Assume that $(n_k,k\geq 1)$ and $(n'_k,k\geq 1)$ are subsequences satisfying that
		\be
		\lbrace f_{N,n_k}\rbrace\log{f_{N,n_k}}\to a\in [0,\infty]\quad\quad and \quad\quad\lbrace f_{N,n_k}\rbrace\to 0,\notag
		\ee
		and
		\be
		(1-\lbrace f_{N,n'_k}\rbrace)\log{f_{N,n'_k}}\to a\in [0,\infty]\quad\quad and \quad\quad\lbrace f_{N,n'_k}\rbrace\to 1,\notag
		\ee
		Then
		\be
		\begin{aligned}
			\mathbb{P}(\tilde{L}_{N,n_k}=[f_{N,n_k}-1)=1-\mathbb{P}(\tilde{L}_{N,n_k}=[f_{N,n_k}])=\exp{\Big\{-\frac{Ne^a}{\sqrt{2\pi}(N-1)}\Big\}};\\
			\mathbb{P}(\tilde{L}_{N,n'_k}=[f_{N,n'_k}])=1-\mathbb{P}(\tilde{L}_{N,n'_k}=[f_{N,n'_k}]+1)=\exp{\Big\{-\frac{Ne^{-a}}{\sqrt{2\pi}(N-1)}\Big\}}.\notag
		\end{aligned}
		\ee
	\end{corollary}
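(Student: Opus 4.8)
The plan is to reduce both two-point laws to a single Poisson-zero computation and then to supply a sharp first moment. Let $\tilde T_{n,k}$ be the number of increasing paths of length $k$ in $\mathcal P_{n,k}$; since a prefix of an increasing path is increasing, $\{\tilde L_{N,n}\ge k\}=\{\tilde T_{n,k}\ge 1\}$, and, each of the $M$ paths of $\mathcal P_{n,k}$ being increasing with probability $1/(k+1)!$,
\[
\mathbb E(\tilde T_{n,k})=\frac{N^{n+1}-N^{k}}{(N-1)(k+1)!}.
\]
For the subsequence $(n_k)$ the hypothesis $\{f_{N,n_k}\}\to 0$ forces $\limsup_k\{f_{N,n_k}\}<1$, so by (\ref{12}) of Corollary \ref{co1} we have $\tilde L_{N,n_k}\in\{[f_{N,n_k}]-1,[f_{N,n_k}]\}$ with probability tending to $1$; likewise $\{f_{N,n'_k}\}\to 1$ gives $\liminf_k\{f_{N,n'_k}\}>0$, and by (\ref{13}) localizes $\tilde L_{N,n'_k}$ on $\{[f_{N,n'_k}],[f_{N,n'_k}]+1\}$. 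It therefore suffices to determine the splitting probability, i.e.\ the limit of $\mathbb P(\tilde T_{n,m}=0)$ at the critical length $m=[f_{N,n_k}]$ in the first case and $m=[f_{N,n'_k}]+1$ in the second.

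The heart of the argument is a refined first moment. Applying Stirling's formula to $(k+1)!=\Gamma(k+2)$ in the sharp form $\log\Gamma(K+1)=K\log K-K+\tfrac12\log(2\pi K)+o(1)$ with $K=k+1$, and using the defining relations $f_{N,n}+\tfrac12=e^{b_{N,n}+1}$ and $b_{N,n}e^{b_{N,n}+1}=n\log N$, I would show that, uniformly for $k-f_{N,n}=O(1)$,
\[
\mathbb E(\tilde T_{n,k})=\frac{N}{\sqrt{2\pi}\,(N-1)}\,\exp\!\Bigl\{-(b_{N,n}+1)\bigl(k-f_{N,n}\bigr)\Bigr\}\bigl(1+o(1)\bigr),
\]
the factor $\sqrt{2\pi}$ being exactly the half-integer Stirling correction and $N/(N-1)$ the leading coefficient of $M$. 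Since $b_{N,n}+1=\log(f_{N,n}+\tfrac12)$, the hypotheses turn into the stated rates: in the first case $k-f_{N,n_k}=-\{f_{N,n_k}\}$ and $\{f_{N,n_k}\}\log f_{N,n_k}\to a$ give $\mathbb E(\tilde T_{n_k,[f_{N,n_k}]})\to\lambda:=\tfrac{N}{\sqrt{2\pi}(N-1)}e^{a}$, while in the second $k-f_{N,n'_k}=1-\{f_{N,n'_k}\}$ and $(1-\{f_{N,n'_k}\})\log f_{N,n'_k}\to a$ give $\mathbb E(\tilde T_{n'_k,[f_{N,n'_k}]+1})\to\tfrac{N}{\sqrt{2\pi}(N-1)}e^{-a}$. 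The same estimate shows that the expectation one step below the critical length diverges and one step above tends to $0$.

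It remains to upgrade the first moment to $\mathbb P(\tilde T_{n,m}=0)\to e^{-\lambda}$, that is, to prove that $\tilde T_{n,m}$ is asymptotically $\mathrm{Poisson}(\lambda)$. I would run the Chen--Stein method with the dependency neighbourhood of a path taken to be all paths meeting it in at least one vertex. The combinatorial overlap counts $a_m$ of Lemma \ref{le1} carry over verbatim, but the joint probability $q_m$ that two increasing paths sharing a common downward segment of $m$ vertices are simultaneously increasing must be computed afresh, the shared segment imposing an ordering constraint linking the two branches. The step I expect to be the main obstacle is to show that the total overlap mass $\sum_{1\le m\le k}a_m q_m$ is $o(1)$: this forces the second factorial moment of $\tilde T_{n,m}$ to converge to $\lambda^2$ and the Chen--Stein error to vanish, giving $\tilde T_{n,m}\overset{d}{\longrightarrow}\mathrm{Poisson}(\lambda)$. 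Granting this, $\mathbb P(\tilde L_{N,n_k}\le[f_{N,n_k}]-1)=\mathbb P(\tilde T_{n_k,[f_{N,n_k}]}=0)\to e^{-\lambda}$; combined with the localization from Corollary \ref{co1}, which makes $\mathbb P(\tilde L_{N,n_k}\le[f_{N,n_k}]-2)=\mathbb P(\tilde T_{n_k,[f_{N,n_k}]-1}=0)\to 0$, this yields $\mathbb P(\tilde L_{N,n_k}=[f_{N,n_k}]-1)\to e^{-\lambda}$ and hence $\mathbb P(\tilde L_{N,n_k}=[f_{N,n_k}])\to 1-e^{-\lambda}$, which is the first pair of limits. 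The argument for $(n'_k)$ is identical with $a$ replaced by $-a$ and the critical length $[f_{N,n'_k}]+1$.
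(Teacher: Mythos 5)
Your overall route coincides with the paper's: localize $\tilde L$ to two consecutive integers via Corollary \ref{co1}, then identify the splitting probability as a Poisson-zero probability $e^{-\lambda}$ at the critical length, with $\lambda$ obtained from a Stirling-type estimate of the mean. One preliminary remark: the step you flag as the main obstacle --- upgrading the first moment to a Poisson limit --- is not open. It is exactly Lemma \ref{le5}, whose conclusion (\ref{18}) already gives $\left|\mathbb P(\tilde T_{n,k}=0)-e^{-\mathbb E(\tilde T_{n,k})}\right|\le D(N,k)$ with $D(N,k)\to 0$ as $k\to\infty$; you may cite it, as the paper does, instead of rerunning Chen--Stein.

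The genuine problem is your ``refined first moment'': it contradicts the exact formula you state two lines earlier. From $\mathbb E(\tilde T_{n,k})=\frac{N^{n+1}-N^{k}}{(N-1)(k+1)!}$ (the paper's (\ref{ff})) and the exact identity $b_{N,n}+1=\log\left(f_{N,n}+\tfrac{1}{2}\right)$, Stirling applied to $(k+1)!=\Gamma(k+2)$ gives, uniformly for $k-f_{N,n}=O(1)$,
\[
\mathbb E(\tilde T_{n,k})=\frac{N}{\sqrt{2\pi}\,(N-1)}\exp\bigl\{-(b_{N,n}+1)\,(k+1-f_{N,n})\bigr\}\bigl(1+o(1)\bigr),
\]
i.e.\ with $k+1-f_{N,n}$ in the exponent: the relevant Gamma value is $\Gamma(k+2)=\Gamma(f_{N,n}+x+1)$ with $x=k+1-f_{N,n}$ in Lemma \ref{le4}. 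Your expression omits this $+1$ and is therefore too large by the divergent factor $e^{b_{N,n}+1}\sim f_{N,n}$. Under your own conventions ($\lbrace\tilde L_{N,n}\ge k\rbrace=\lbrace\tilde T_{n,k}\ge 1\rbrace$ and the displayed mean), the expectation at your critical length $m=[f_{N,n_k}]$ tends to $0$, not to $\frac{N}{\sqrt{2\pi}(N-1)}e^{a}$; the finite nonzero limit occurs at $m=[f_{N,n_k}]-1$, and carrying your computation through consistently produces a two-point law on $\lbrace[f_{N,n_k}]-2,\,[f_{N,n_k}]-1\rbrace$, one step below the statement. The reason your formula nonetheless lands on the stated constants is that the paper's proof makes the matching substitution: in (\ref{19}), (\ref{20}) and in the displays of its proof of this corollary, it inserts $\mathbb E(\tilde T_{n,k-1})=\frac{N^{n+1}-N^{k-1}}{(N-1)\,k!}$ where (\ref{18}) at level $k$ calls for $\lambda=\mathbb E(\tilde T_{n,k})$; equivalently, the theorem and corollaries are stated for path length counted in vertices, while (\ref{ee})--(\ref{ff}) count edges. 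So you must either adopt the vertex count from the outset (then your refined first moment becomes the correct one and the indices match the statement), or keep (\ref{ff}) and correct the exponent, in which case the corollary's indices shift by one. As written, the derivation of your central formula from Stirling fails, and everything downstream depends on it.
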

	
	\section{Proof of Theorem \ref{th3} and corollary \ref{co1} and \ref{co2}}
	\begin{lemma}\label{le3}
		Let $X_1,\cdots,X_n$ indicator variables with $\mathbb{P}(X_i=1)=p_i,\;W=\sum_{i=1}^nX_i,$ and $\lambda=\mathbb{E}W=\sum_ip_i.$ For each $i$, let $N_i\subseteq\lbrace 1,\cdots,n\rbrace$ such that $X_i$ is independent of $\lbrace X_j:j\notin N_i\rbrace$. If $p_{ij}:=\mathbb{E}[X_iX_j]$ and $Z\sim Po$($\lambda$), then 
		\be
		d_{TV}(W,Z)\leq min\lbrace1,\lambda^{-1}\rbrace\left(\sum_{i=1}^n\sum_{j\in N_i}p_ip_j+\sum_{i=1}^n\sum_{j\in N_i/{i}}p_{ij}\right).
		\notag
		\ee
	\end{lemma}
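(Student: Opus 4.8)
The plan is to establish this bound by the Stein--Chen method for Poisson approximation, with the neighbourhoods $N_i$ supplying the local dependence. First I would set up the Stein equation for $\mathrm{Po}(\lambda)$: for each fixed $A\subseteq\mathbb{Z}_{\ge 0}$ there is a bounded solution $g=g_A$ of
\[
\lambda\,g(k+1)-k\,g(k)=\mathbb{I}_A(k)-\mathbb{P}(Z\in A),\qquad k\ge 0 .
\]
The only property of $g_A$ that I need is the classical Stein-factor estimate on its first difference, $\sup_{k\ge 0}|g_A(k+1)-g_A(k)|\le\min\{1,\lambda^{-1}\}$, which I would cite rather than reprove. Evaluating the Stein equation at the random point $k=W$ and taking expectations gives $\mathbb{P}(W\in A)-\mathbb{P}(Z\in A)=\mathbb{E}[\lambda g(W+1)-W g(W)]$; since $d_{TV}(W,Z)=\sup_A|\mathbb{P}(W\in A)-\mathbb{P}(Z\in A)|$, it suffices to bound $|\mathbb{E}[\lambda g(W+1)-Wg(W)]|$ uniformly in $A$ by the right-hand side of the statement.

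Next I would exploit the dependence structure. For each $i$ write $W=X_i+V_i+Z_i$, where $V_i=\sum_{j\in N_i\setminus\{i\}}X_j$ gathers the neighbours of $i$ other than $i$ itself and $Z_i=\sum_{j\notin N_i}X_j$ gathers the non-neighbours; the defining property of $N_i$ (which forces $i\in N_i$ for nondegenerate $X_i$) makes $X_i$ independent of $Z_i$. Using $\lambda=\sum_i p_i$ together with $\mathbb{E}[X_i g(W)]=\mathbb{E}[X_i g(V_i+Z_i+1)]$ (valid because $X_i$ is $0$--$1$ valued), I would expand both $g(W+1)$ and $g(V_i+Z_i+1)$ around the common reference value $g(Z_i+1)$. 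The independence of $X_i$ and $Z_i$ then makes both reference contributions equal to $p_i\,\mathbb{E}[g(Z_i+1)]$, so they cancel and leave
\bestar
\mathbb{E}[\lambda g(W+1)-Wg(W)]&=&\sum_i p_i\,\mathbb{E}\big[g(W+1)-g(Z_i+1)\big]\\
&&-\sum_i\mathbb{E}\big[X_i\big(g(V_i+Z_i+1)-g(Z_i+1)\big)\big].
\eestar

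Finally I would estimate the two sums with the Stein-factor bound. Since $W+1$ exceeds $Z_i+1$ by $X_i+V_i\ge 0$ and $V_i+Z_i+1$ exceeds $Z_i+1$ by $V_i\ge 0$, telescoping the first differences of $g$ yields $|g(W+1)-g(Z_i+1)|\le\min\{1,\lambda^{-1}\}(X_i+V_i)$ and $|g(V_i+Z_i+1)-g(Z_i+1)|\le\min\{1,\lambda^{-1}\}\,V_i$. Taking expectations and using $\mathbb{E}[X_i+V_i]=\sum_{j\in N_i}p_j$ and $\mathbb{E}[X_iV_i]=\sum_{j\in N_i\setminus\{i\}}p_{ij}$ reproduces the two double sums of the statement, each carrying the prefactor $\min\{1,\lambda^{-1}\}$; a supremum over $A$ then finishes the proof. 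I expect the only delicate step to be the bookkeeping in the cancellation — verifying that the independence $X_i\perp Z_i$ is applied exactly where needed and that the two reference terms genuinely coincide — whereas the concluding telescoping estimates are routine.
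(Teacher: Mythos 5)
Your proof is correct, but it is worth noting how it relates to the paper: the paper does not prove this lemma at all --- its entire ``proof'' is a citation to Theorem 4.7 of the reference by Ross (\emph{Fundamentals of Stein's method}), of which this statement is a verbatim copy. What you have written is essentially the proof of that cited theorem, i.e.\ the standard Stein--Chen local-dependence argument: the Stein equation $\lambda g(k+1)-kg(k)=\mathbb{I}_A(k)-\mathbb{P}(Z\in A)$, the cited Stein-factor bound $\sup_k|g_A(k+1)-g_A(k)|\leq \min\{1,\lambda^{-1}\}$, the decomposition $W=X_i+V_i+Z_i$ with $X_i$ independent of $Z_i$, the cancellation of the two reference terms $p_i\,\mathbb{E}[g(Z_i+1)]$, and the telescoping estimates. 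All the steps check out: $\mathbb{E}[X_ig(W)]=\mathbb{E}[X_ig(V_i+Z_i+1)]$ uses only that $X_i$ is an indicator; the cancellation uses the independence of $X_i$ from $\{X_j:j\notin N_i\}$ exactly where it is available; and $\mathbb{E}[X_i+V_i]=\sum_{j\in N_i}p_j$ together with $\mathbb{E}[X_iV_i]=\sum_{j\in N_i\setminus\{i\}}p_{ij}$ produces precisely the two double sums with the prefactor $\min\{1,\lambda^{-1}\}$. Your parenthetical observation that $i\in N_i$ is forced unless $X_i$ is degenerate is the right way to dispose of the one notational edge case (if $i\notin N_i$ the decomposition $W=X_i+V_i+Z_i$ would double-count $X_i$). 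So, compared with the paper, you have produced a complete self-contained argument where the authors outsource to the literature; the only ingredient you cite rather than prove --- the bound on the first difference of the Stein solution --- is a standard lemma (it appears as Lemma 4.4 in the same reference the paper cites), and citing it is entirely reasonable.
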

	\begin{proof}
		See Theorem 4.7 in \cite{ref13}.
	\end{proof}
	\begin{lemma}\label{le4}Let $x \in \mathbb{R}$ and $f_{N,n}=\frac{n\log {N}}{b_{N,n}}-\frac{1}{2}$, where $b_{N,n}$ is the solution of the equation $b_{N,n} \mathrm{e}^{b_{N,n}}=\mathrm{e}^{-1} n\log N$. Then
		\be
		\lim_{n\to+\infty}\frac{N^n}{\Gamma(f_{N,n}+x+1)}=\lim_{n\to+\infty}\frac{1}{\sqrt{2\pi}}e^{-x\log{(f_{N,n}+x)}}=
		\begin{cases}
			+\infty,&\text{$x<0$},\\
			0,&\text{$x>0$},
		\end{cases}
		\notag
		\ee
		where $\Gamma(y)=\int_{0}^{\infty} \mathrm{e}^{-t} t^{y-1} \mathrm{~d} t$ denotes the Gamma function at the point $y$.
	\end{lemma}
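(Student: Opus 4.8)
The plan is to reduce everything to Stirling's formula after extracting a clean closed-form identity from the implicit definition of $b_{N,n}$. Writing $b=b_{N,n}$ and $f=f_{N,n}$ for brevity, the defining equation $be^{b}=e^{-1}n\log N$ rearranges to $\frac{n\log N}{b}=e^{b+1}$, so that $f+\frac12=e^{b+1}$ and $b=\log(f+\frac12)-1$. Substituting this back yields the key identity
\[
n\log N=b\Big(f+\tfrac12\Big)=\Big(f+\tfrac12\Big)\log\Big(f+\tfrac12\Big)-\Big(f+\tfrac12\Big),
\]
which is exactly the exponent appearing in Stirling's formula, evaluated at $f+\frac12$. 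Since $be^{b}\to\infty$ as $n\to\infty$ forces $b\to\infty$ and hence $f=e^{b+1}-\frac12\to\infty$, the argument of the Gamma function tends to infinity, so the whole argument becomes an asymptotic expansion in powers of $1/f$.

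Next I would apply the logarithmic form of Stirling's expansion, $\log\Gamma(z+1)=(z+\frac12)\log z-z+\frac12\log(2\pi)+O(1/z)$, at the point $z=f+x$, giving
\[
\log\Gamma(f+x+1)=\Big(f+x+\tfrac12\Big)\log(f+x)-(f+x)+\tfrac12\log(2\pi)+O(1/f).
\]
Subtracting this from $n\log N$ via the identity above, and inserting the elementary expansions $\log(f+\frac12)=\log f+\frac{1}{2f}+O(f^{-2})$ and $\log(f+x)=\log f+\frac{x}{f}+O(f^{-2})$, the terms of order $f\log f$, the terms of order $f$, and the spurious constants cancel in pairs, leaving
\[
n\log N-\log\Gamma(f+x+1)=-x\log f-\tfrac12\log(2\pi)+O(1/f).
\]
Exponentiating then gives $\frac{N^{n}}{\Gamma(f+x+1)}=\frac{1}{\sqrt{2\pi}}e^{-x\log f}\bigl(1+O(1/f)\bigr)$.

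Finally, because $\log(f+x)=\log f+O(1/f)$, one also has $e^{-x\log(f+x)}=e^{-x\log f}\bigl(1+O(1/f)\bigr)$, so the two quantities whose limits are being compared differ only by a factor tending to $1$; this establishes the first asserted equality. Since $f=f_{N,n}\to\infty$, the factor $e^{-x\log(f+x)}$ tends to $0$ for $x>0$ and to $+\infty$ for $x<0$, which gives the case distinction and completes the argument. I expect the delicate point to be the bookkeeping of the half-integer shift: it is precisely the $+\frac12$ built into the definition of $f_{N,n}$ (so that Stirling is applied at $f+\frac12$ rather than at $f$) that produces the exact constant $\frac{1}{\sqrt{2\pi}}$, and this forces the expansions to be carried to order $O(1/f)$ so that the growing terms cancel while the genuine constant survives. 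Verifying that cancellation is the only real work; the rest is routine.
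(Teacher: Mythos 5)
Your proof is correct, and it is genuinely more self-contained than the paper's. Both arguments rest on Stirling's formula, but the paper applies Stirling in product form, factors the resulting expression as
\[
\frac{N^n \exp\left(-f_{N,n}\{\log (f_{N,n}+x)-1\}\right)}{\sqrt{2 \pi(f_{N,n}+x)}} \cdot \mathrm{e}^{-x\{\log (f_{N,n}+x)-1\}},
\]
and then \emph{cites} the proof of Lemma 1 of Chryssaphinou and Vaggelatou for the nontrivial fact that the first factor tends to $e^{-x}/\sqrt{2\pi}$ — that citation is exactly where the interaction between $N^n$ and the implicit definition of $b_{N,n}$ is buried (and it is for the $N=1$ case, with $n\log N$ replacing $\log n$). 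You instead extract the closed-form identity $n\log N=(f+\tfrac12)\log(f+\tfrac12)-(f+\tfrac12)$ directly from $b\,\mathrm{e}^{b}=\mathrm{e}^{-1}n\log N$, apply Stirling in logarithmic form at $z=f+x$, and carry out the cancellation explicitly; I checked the bookkeeping and it does close up, leaving $n\log N-\log\Gamma(f+x+1)=-x\log f-\tfrac12\log(2\pi)+O(1/f)$, after which the conclusion follows just as in the paper from $f_{N,n}\to\infty$ (your justification of this via $b\to\infty$, $f=e^{b+1}-\tfrac12$, is also what makes the whole thing rigorous). What the paper's route buys is brevity; what yours buys is a verifiable, reference-free argument that moreover explains \emph{why} the constant $1/\sqrt{2\pi}$ and the half-integer shift in the definition of $f_{N,n}$ appear. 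One cosmetic remark: your ratio argument shows the two displayed quantities agree up to a factor $1+O(1/f)$, which suffices for equality of the limits since both are $0$ or both $+\infty$; it would be worth saying that sentence explicitly, as the asserted "equality of limits" is in the extended sense of $[0,+\infty]$.
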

	\begin{proof}
		Stirling's formula shows
		\be
		\Gamma(f_{N,n}+x+1)\sim\frac{\sqrt{2 \pi}(f_{N,n}+x)^{f_{N,n}+x+1 / 2}}{\mathrm{e}^{f_{N,n}+x}}.\notag
		\ee
		Hence, it suffices to show that
		\be
		\begin{aligned}
			\lim_{n\to\infty}\frac{N^n \mathrm{e}^{f_{N,n}+x}}{\sqrt{2 \pi}(f_{N,n}+x)^{f_{N,n}+x+1 / 2}} =
			\begin{cases}
				+\infty,&\text{$x<0$},\\
				0,&\text{$x>0$}.
			\end{cases}
			\notag
		\end{aligned}
		\ee
		We have 
		\be
		\begin{aligned}
			\frac{N^n \mathrm{e}^{f_{N,n}+x}}{\sqrt{2 \pi}(f_{N,n}+x)^{f_{N,n}+x+1 / 2}} &=\frac{N^n \exp \left(-f_{N,n}\lbrace\log (f_{N,n}+x)-1\rbrace\right)}{\sqrt{2 \pi(f_{N,n}+x)}} \mathrm{e}^{-x\lbrace\log (f_{N,n}+x)-1\rbrace}.
			\notag
		\end{aligned}
		\ee
		By the proof of Lemma 1 in \cite{ref5}, we have 
		\be
		\lim_{n\to\infty}\frac{N^n \exp \left(-f_{N,n}\lbrace\log (f_{N,n}+x)-1\rbrace\right)}{\sqrt{2 \pi(f_{N,n}+x)}}=\frac{e^{-x}}{\sqrt{2\pi}}.\notag
		\ee
		Since $\log (f_{N,n}+x)\to+\infty$, as $n\to\infty$, we obtain
		\be
		\lim_{n\to\infty}\frac{N^n \mathrm{e}^{f_{N,n}+x}}{\sqrt{2 \pi}(f_{N,n}+x)^{f_{N,n}+x+1 / 2}}=
		\begin{cases}
			+\infty,&\text{$x<0$},\\
			0,&\text{$x>0$}.
		\end{cases}
		\notag
		\ee
	\end{proof}
	Similarly with the proof of Theorem \ref{th2}, we define $\tilde{T}_{n,k}$ to be the number of increasing paths in $\mathcal{P}_{n,k}$:
	\bestar
	\tilde{T}_{n,k}=\sum_{P\in \mathcal{P}_{n,k}} \mathbb{I}\, (P ~\mbox{is  increasing}).
	\eestar
	Let $M$ be defined in (\ref{ee}), it's clear that
	\be
	\mathbb{E}(\tilde{T}_{n,k})=\sum_{P\in\mathcal{P}_{n,k}}\mathbb{P}(P~is~increasing)=\frac{M}{(k+1)!}.\label{ff}
	\ee
	\begin{lemma}\label{le5}
		Let $U$ be the Poisson distribution with mean $\lambda=\mathbb{E}(\tilde{T}_{n,k})$, then for any $N\geq 2$ and $q=8N/(2k+3)\in(0,1)$, we have
		\be
		d_{TV}(\tilde{T}_{n,k},U)\leq \frac{k+2}{(k+1)!}N^{k}+\frac{4N}{(k+2)(1-q)}.
		\notag
		\ee
	\end{lemma}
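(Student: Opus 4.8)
The plan is to apply the Poisson-approximation bound of Lemma \ref{le3} to the indicator variables $X_P:=\mathbb{I}(P\text{ is increasing})$, $P\in\mathcal{P}_{n,k}$, for which $\tilde{T}_{n,k}=\sum_P X_P$, each $p_P=\mathbb{P}(X_P=1)=1/(k+1)!$ by (\ref{ff}), and $\lambda=\mathbb{E}(\tilde{T}_{n,k})=M/(k+1)!$. The key independence input is that, the fitness values being i.i.d.\ continuous, $X_P$ and $X_{\tilde P}$ are independent as soon as $V(P)\cap V(\tilde P)=\emptyset$; hence I take the dependency neighbourhood $N_P=\{\tilde P\in\mathcal{P}_{n,k}:V(P)\cap V(\tilde P)\neq\emptyset\}$. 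Lemma \ref{le3} then yields, after bounding $\min\{1,\lambda^{-1}\}\le\lambda^{-1}$, the estimate $d_{TV}(\tilde{T}_{n,k},U)\le\lambda^{-1}(b_1+b_2)$ with
\[ b_1=\sum_{P}\sum_{\tilde P\in N_P}p_Pp_{\tilde P},\qquad b_2=\sum_{P}\sum_{\tilde P\in N_P\setminus\{P\}}p_{P\tilde P},\qquad p_{P\tilde P}=\mathbb{P}(P,\tilde P\text{ both increasing}). \]
Grouping the ordered intersecting pairs by $m=\#(V(P)\cap V(\tilde P))$ converts both sums into sums over $m$ weighted by the counts $a_m$ supplied by Lemma \ref{le1}, so that the whole proof reduces to two weighted sums.

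For the first term I would write $b_1=((k+1)!)^{-2}\sum_{m=1}^{k+1}a_m$ and use $a_{k+1}=M$ together with $a_m\le 2MN^{k+1-m}$ from Lemma \ref{le1}. This gives
\[ \lambda^{-1}b_1=\frac{1}{M(k+1)!}\sum_{m=1}^{k+1}a_m\le\frac{1}{(k+1)!}\Big(1+2\sum_{l=1}^{k}N^{l}\Big)\le\frac{k+2}{(k+1)!}N^{k}, \]
the last step being a routine geometric-sum estimate valid throughout the range $q\in(0,1)$ (equivalently $k$ large relative to $N$). This is exactly the first term of the asserted bound.

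The substance lies in $b_2$, where I must control $p_{P\tilde P}$ for two paths meeting in exactly $m$ vertices. Here I would exploit the tree geometry: the intersection is a contiguous downward segment $u_1\cdots u_m$ (two downward paths in a tree meet in a sub-path), and since each vertex has a unique parent, at most one of $P,\tilde P$ can extend above $u_1$, while below $u_m$ they branch into distinct children and therefore have vertex-disjoint tails. Under the joint increasing constraint the values on the common prefix (everything at or above $u_m$) are forced to be the smallest among the $2k+2-m$ values involved and to appear in the unique increasing order, after which the two tails of lengths $s_P,s_{\tilde P}\le k+1-m$ interleave freely; writing $l=k+1-m$ this gives
\[ p_{P\tilde P}=\frac{\binom{s_P+s_{\tilde P}}{s_P}}{(2k+2-m)!}\le\frac{\binom{2l}{l}}{(k+1+l)!}. \]
Combining with $a_m\le 2MN^{k+1-m}$ produces $\lambda^{-1}b_2\le2\sum_{l=1}^{k}c_l$ with $c_l=N^l\binom{2l}{l}(k+1)!/(k+1+l)!$. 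I would finish by checking $c_1=2N/(k+2)$ and the ratio bound $c_{l+1}/c_l=\frac{N(4l+2)}{(l+1)(k+l+2)}\le q$ for every $l\ge1$ (this is precisely where $q=8N/(2k+3)$ enters and why $q<1$ is required), so that $\sum_{l\ge1}c_l\le c_1/(1-q)$ and hence $\lambda^{-1}b_2\le\frac{4N}{(k+2)(1-q)}$, the second term.

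The main obstacle is the exact evaluation of $p_{P\tilde P}$: one must argue carefully that the shared part is a contiguous segment, that it forms the global minimum block under the simultaneous increasing constraints, handle the two possible configurations according to whether one path rises above $u_1$, and then convert the resulting central-binomial factor into a clean geometric series whose common ratio stays uniformly below $q$. Everything else is bookkeeping with the counts $a_m$ already established in Lemma \ref{le1}.
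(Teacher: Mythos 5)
Your proposal is correct and follows essentially the same route as the paper: the same application of Lemma \ref{le3} with intersection neighbourhoods, the same use of the counts $a_m$ from Lemma \ref{le1}, the same reduction of the joint pair-probability to its worst (diagonal) case, and the same geometric series with ratio $q$ (your $c_l$ is exactly the paper's $b_s$ under $l=k+1-s$, with $c_1=2N/(k+2)$ matching $b_k$). The only cosmetic difference is that you derive the exact joint probability by counting linear extensions of the constraint poset, whereas the paper evaluates a Beta integral by conditioning on the value at the branch point, and you verify the ratio bound $c_{l+1}/c_l\leq q$ by direct algebra rather than by the paper's maximization over $s$; both yield identical formulas and the identical final bound.
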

	\begin{proof}
		For each path $P\in\mathcal{P}_{n,k}$, define $N_P=\lbrace P':P'\in\mathcal{P}_{n,k},~P'~is~dependent~of~P\rbrace$. Then by applying Lemma \ref{le3}, we have
		\be
		\begin{aligned}
			d_{TV}(\tilde{T}_{n,k},Po(\lambda))&\leq\min\lbrace 1,\lambda^{-1}\rbrace(H_1+H_2),\label{15}
		\end{aligned}
		\ee
		where
		\bestar
		H_1=\sum_{P\in\mathcal{P}_{n,k}}\sum_{P'\in N_P}\frac{1}{((k+1)!)^2},~~H_2=\sum_{P\in\mathcal{P}_{n,k}}\sum_{P'\in N_P/\lbrace P\rbrace}\mathbb{P}(P ~and ~\tilde{P}~are ~increasing).
		\eestar
		
		For $H_1$, for $1\leq m\leq k$, recalling that by the proof of Lemma \ref{le1}, 
		\bestar
		\max_{j}B(m,j)=(k+1-m)(N-1)N^{k-m}+N^{k-m+1}+N^{k-m},
		\eestar
		which together with $B(k+1,j)=1$, implies that
		\be
		\begin{aligned}
			\max_{P\in\mathcal{P}_{n,k}}|N_P|&=\max_{j}\sum_{m=1}^{k+1}B(m,j)\leq\sum_{m=1}^{k+1}\max_{j}B(m,j)\\
			&\leq \sum_{m=1}^{k}\Big((k-m)(N-1)N^{k-m}+2N^{k-m+1}\Big)+1\\
			&=kN^{k}+\sum_{i=1}^{k-1}N^i\leq (k+2)N^{k}.\notag
		\end{aligned}
		\ee
		Together with (\ref{ff}), yields
		\be
		H_1\le\frac{M}{((k+1)!)^2}\max_{P\in\mathcal{\tilde{P}}_{n,k}}|N_P|\le\frac{(k+2)\lambda N^{k}}{(k+1)!}.\label{16}
		\ee
		
		To bound $H_2$, notice that for any two paths $P=x_0\cdots x_k,\tilde{P}=\tilde{x}_0\cdots\tilde{x}_k\in\mathcal{P}_{n,k}$ with $|x_0|\leq|\tilde{x}_0|$, if $P$ and $\tilde{P}$ are not vertex-disjoint, then there exsit integers $s,t$ such that $0\leq s\leq t\leq k$ and $x_{t-s+i}=\tilde{x}_i$ iff $0\leq i\leq s$. One gets
		\be
		\begin{aligned}
			&\mathbb{P}(P ~and ~\tilde{P}~are ~increasing~paths~down~the~tree)\\
			&~~~~=\int_0^1\frac{(1-x)^{2k-s+1}(2k+2-s-t)!}{(2k-s+1)!(k+1-s)!(k+1-t)!}dx\\
			&~~~~=\frac{(2k+2-s-t)!}{(2k+2-s)!(k+1-s)!(k+1-t)!}.
		\end{aligned}
		\notag
		\ee
		Since $\frac{(2k+2-s-t)!}{(2k+2-s)!(k+1-s)!(k+1-t)!}(0\leq s\leq k)$ is nonincreasing with respect to $t$, we obtain that
		\be
		 \frac{(2k+2-s-t)!}{(2k+2-s)!(k+1-s)!(k+1-t)!}\leq \frac{(2k+2-2s)!}{(2k+2-s)!(k+1-s)!(k+1-s)!}\label{gg}
		\ee
		holds for every $t\in[s,k]$.
	Thus it follows from Lemma \ref{le1} and (\ref{gg}) that
		\be
		\begin{aligned}
			H_2&=\sum_{P\in\mathcal{P}_{n,k}}\sum_{P'\in N_P/\lbrace P\rbrace}\mathbb{P}(P ~and ~\tilde{P}~are ~increasing)\\
			& \leq \sum_{s=1}^{k}a_s\frac{(2k+2-2s)!}{(2k+2-s)!(k+1-s)!(k+1-s)!}\\
			& =2\lambda\sum_{s=1}^{k}\frac{(2k+2-2s)!(k+1)!N^{k+1-s}}{(2k+2-s)!(k+1-s)!(k+1-s)!}\\
			& :=2\lambda\sum_{s=1}^{k}b_s.\\
		\end{aligned}
		\notag
		\ee
		Notice that
		$$\frac{b_{s-1}}{b_s}=\frac{(4k-4s+6)N}{(k-s+2)(2k-s+3)},$$
		which gets the maximum value of when $s=\frac{2k+3-\sqrt{2k+3}}{2}$, we obtain that
		\be
		\frac{b_{s-1}}{b_s}\leq\frac{8\sqrt{2k+3}N}{(1+\sqrt{2k+3})(2k+3)+\sqrt{2k+3})}\leq q.
		\notag
		\ee
		Then
		\bestar
		\begin{aligned}
			H_2\leq2\lambda\sum_{i=1}^{k-1}q^ib_{k}\leq\frac{4N\lambda}{(k+2)(1-q)},
		\end{aligned}
		\eestar
			which together with (\ref{15}) and (\ref{16}), completes the proof.
	\end{proof}

	\begin{proof}[Proof of Theorem \ref{th3}.]
		For simplicity, we write $D(N,k)=(k+2)N^k/(k+1)!+4N/(k+2)(1-q)$. It is obvious that
		\be
		\mathbb{P}(\tilde{L}_{N,n}<k)=\mathbb{P}(\tilde{T}_{n,k}=0).
		\notag
		\ee
		Thus, using Lemma \ref{le5}, we can derive upper and lower bounds for the distribution of $\tilde{L}_{N,n}$:
		\be
		e^{-\lambda}-D(N,k)\leq \mathbb{P}(\tilde{L}_{N,n}<k)\leq e^{-\lambda}+D(N,k).\label{18}
		\ee
		By the definition of $f_{N,n}$ and $b_{N,n}$, we have
		\be
		\mathbb{P}([f_{N,n}]-1\leq \tilde{L}_{N,n}\leq[f_{N,n}]+1)=\mathbb{P}(\tilde{L}_{N,n}<[f_{N,n}]+2)-\mathbb{P}(\tilde{L}_{N,n}< [f_{N,n}]-1),
		\notag
		\ee
		for $k=[f_{N,n}]+2$ and $k=[f_{N,n}]-1$, the condition of Lemma \ref{le5} is satisfied. Then from (\ref{18}), we get
		\be
		\left|\mathbb{P}(\tilde{L}_{N,n}<[f_{N,n}]+2)-\exp\left(-\frac{N^{n+1}-N^{[f_{N,n}]+1}}{(N-1)([f_{N,n}]+2)!}\right)\right|\leq D(N,[f_{N,n}]+2);\notag\\
		\left|\mathbb{P}(\tilde{L}_{N,n}<[f_{N,n}]-1)-\exp\left(-\frac{N^{n+1}-N^{[f_{N,n}]-2}}{(N-1)([f_{N,n}]-1)!}\right)\right|\leq D(N,[f_{N,n}]-1).\label{19}
		\ee
		Since $f_{N,n}\to+\infty$, the bounds $D(N,[f_{N,n}]+2)$ and $D(N,[f_{N,n}]-1)$ tend to 0 as $n\to\infty$. 
		By Lemma \ref{le4}, we can verify that
		\be
		\begin{aligned}
			&\lim_{n\to\infty}\frac{N^{n+1}-N^{[f_{N,n}]+1}}{(N-1)([f_{N,n}]+2)!}=\frac{N}{N-1}\lim_{n\to\infty}\frac{N^n}{([f_{N,n}]+2)!}=\frac{N}{N-1}\lim_{n\to\infty}\frac{N^n}{\Gamma([f_{N,n}]+3)}=0 \quad \text { and } \\
			&\lim_{n\to\infty}\frac{N^{n+1}-N^{[f_{N,n}]-2}}{(N-1)([f_{N,n}]-1)!}=\frac{N}{N-1}\lim_{n\to\infty}\frac{N^n}{([f_{N,n}]-1)!}=\frac{N}{N-1}\lim_{n\to\infty}\frac{N^n}{\Gamma([f_{N,n}])}=+\infty,\notag
		\end{aligned}
		\ee
		which, together with (\ref{19}), finally completes the proof of the theorem.
	\end{proof}
	
	\begin{proof}[Proof of Corollary \ref{co1}]
		We only prove (\ref{12}). The proof of the (\ref{13}) is along the same lines. (\ref{14}) is an immediate consequence of (\ref{12}) and (\ref{13}). Using (\ref{18}), we get 
		\be
		\begin{aligned}
			&\left|\mathbb{P}(\tilde{L}_{N,n_k}<[f_{N,n_k}]+1)-\exp\left(-\frac{N^{n_k+1}-N^{[f_{N,n_k}]}}{(N-1)([f_{N,n_k}]+1)!}\right)\right|\leq D(N,[f_{N,n_k}]+1).\label{20}
		\end{aligned}
		\ee
		Since $(n_k,k\geq 1)$ is a strictly increasing sequence such that $\limsup_{k\to\infty}\lbrace f_{N,n_k}\rbrace<1$, we have
		\be
		\begin{aligned}
			&\lim_{k\to\infty}\frac{N^{n_k+1}-N^{[f_{N,n_k}]}}{(N-1)([f_{N,n_k}]+1)!}=\frac{N}{N-1}\lim_{k\to\infty}\frac{N^{n_k}}{([f_{N,n_k}]+1)!}=\frac{N}{N-1}\lim_{k\to\infty}\frac{N^{n_k}}{\Gamma([f_{N,n_k}]+2)}=0\notag,
		\end{aligned}
		\ee
		which, combined with Theorem \ref{th3} and (\ref{20}), finally cmopletes the proof of the Corollary \ref{co1}.
	\end{proof}
	\begin{proof}[Proof of Corollary \ref{co2}]
		If $\lbrace f_{N,n_k}\rbrace\log{f_{N,n_k}}\to a\in [0,\infty]$, then 
		\be
		\begin{aligned}
			&\lim_{k\to\infty}\frac{N^{n_k+1}-N^{[f_{N,n_k}]-1}}{(N-1)([f_{N,n_k}])!}=\frac{N}{N-1}\lim_{k\to\infty}\frac{N^{n_k}}{([f_{N,n_k}])!}=\frac{N}{N-1}\lim_{k\to\infty}\frac{N^{n_k}}{\Gamma([f_{N,n_k}]+1)}=\frac{N}{\sqrt{2\pi}(N-1)}e^a\notag.
		\end{aligned}
		\ee
		
		From (\ref{18}), we have
		\be
		\begin{aligned}
			&\left|\mathbb{P}(\tilde{L}_{N,n_k}<[f_{N,n_k}])-\exp\left(-\frac{N^{n_k+1}-N^{[f_{N,n_k}]-1}}{(N-1)([f_{N,n_k}])!}\right)\right|\leq D(N,[f_{N,n_k}]),\notag
		\end{aligned}
		\ee
		which means 
		\be
		\begin{aligned}
			\mathbb{P}(\tilde{L}_{N,n_k}<[f_{N,n_k}])=\exp{\lbrace-\frac{Ne^a}{\sqrt{2\pi}(N-1)}\rbrace},\notag
		\end{aligned}
		\ee
		and hence (\ref{12}) implies that
		\be
		\begin{aligned}
			\mathbb{P}(\tilde{L}_{N,n_k}=[f_{N,n_k}]-1)=1-\mathbb{P}(\tilde{L}_{N,n_k}=[f_{N,n_k}])=\exp{\lbrace-\frac{Ne^a}{\sqrt{2\pi}(N-1)}\rbrace}.\notag
		\end{aligned}
		\ee
		
		If $(1-\lbrace f_{N,n'_k}\rbrace)\log{f_{N,n'_k}}\to a\in [0,\infty]$, then 
		\be
		\begin{aligned}
			&\lim_{k\to\infty}\frac{N^{n'_k+1}-N^{[f_{N,n'_k}]}}{(N-1)([f_{N,n'_k}]+1)!}=\frac{N}{N-1}\lim_{k\to\infty}\frac{N^{n'_k}}{([f_{N,n'_k}]+1)!}=\frac{N}{N-1}\lim_{k\to\infty}\frac{N^{n'_k}}{\Gamma([f_{N,n'_k}]+2)}=\frac{N}{\sqrt{2\pi}(N-1)}e^{-a}\notag,
		\end{aligned}
		\ee
		and from (\ref{20}), we have
		\be
		\begin{aligned}
			\mathbb{P}(\tilde{L}_{N,n'_k}<[f_{N,n'_k}]+1)=\exp{\Big\{-\frac{Ne^{-a}}{\sqrt{2\pi}(N-1)}\Big\}},\notag
		\end{aligned}
		\ee
		and hence (\ref{13}) implies that
		\be
		\begin{aligned}
			\mathbb{P}(\tilde{L}_{N,n'_k}=[f_{N,n'_k}])=1-\mathbb{P}(\tilde{L}_{N,n'_k}=[f_{N,n'_k}]+1)=\exp{\Big\{-\frac{Ne^{-a}}{\sqrt{2\pi}(N-1)}\Big\}}.\notag
		\end{aligned}
		\ee
		The proof of Corollary \ref{co2} is completed.
	\end{proof}

	\vskip 10mm

\noindent International Institute of Finance, School of Management, University of Science and Technology of China,  Hefei 230026, Anhui, China.\\
\indent Email:
rentianxiang7013@mail.ustc.edu.cn\\
\indent Email: wujw@mail.ustc.edu.cn
\end{document}